\def\NAT@def@citea{\def\@citea{\NAT@separator}}
\theoremstyle{plain}
\newtheorem{theorem}{Theorem}[section]
\theoremstyle{definition}
\newtheorem{definition}[theorem]{Definition}
\theoremstyle{remark}
\newtheorem{remark}{Remark}
\begin{document}

\articletype{Research Article}

\title{A stochastic control approach for constrained stochastic differential games with jumps and regimes}

\author{
\name{E. Savku\thanks{CONTACT E. Savku. Email: esavku@gmail.com} }
\affil{University of Oslo, Department of Mathematics, Oslo, Norway}
}

\maketitle

\begin{abstract}
We develop an approach for two player constraint zero-sum and nonzero-sum stochastic differential games, which are modeled by Markov regime-switching jump-diffusion processes. We provide the relations between a usual stochastic optimal control setting and a Lagrangian method. In this context, we prove corresponding theorems for two different type of constraints, which lead us to find real valued and stochastic Lagrange multipliers, respectively. Then, we illustrate our results for a nonzero-sum game problem with stochastic maximum principle technique. Our application is an example of cooperation between a bank and an insurance company, which is a popular, well-known business agreement type, called Bancassurance. 
\end{abstract}

\begin{keywords}
Stochastic optimal control, Stochastic Differential Games, Regime-Switches, Stochastic Maximum Principle, Insurance.
\end{keywords}

\section{Introduction}
\label{sec:intro}
A regime-switching model is one of the most powerful tools to capture abrupt changes efficiently in a wide range of random phenomenons. The discrete shifts from one state to another may easily describe financial, natural or mechanical events mathematically; hence, they enjoy with a substantial application area. \\

\noindent In this work, specifically, we focus on the one in the field of finance and actuarial science. The states of a Markov chain can be seen as the proxies of the macroeconomic instruments such as gross domestic product or sovereign credit rating. Furthermore, when we observe how regulation policies issued by governments or financial institutions lead deep modifications in the microstructure of the financial markets (see \cite{savku4}), the importance of regime-switching models appears brightly. Moreover, the periods arised after more catastrophic events like a financial crisis; e.g. the bankruptcy of Lehman Brothers in 2008, can be described by such systems. Additionally, we can combine regime-switches with stochastic optimal control, which is another fundamental method of managing random events (for complete treatments of control theory, see \cite{oksendal3} and \cite{yong}). Hence, these models have attracted many researchers so far such as \cite{crepey2, elliott1, zes:02, pamen, lv, mao, savku2, savku3, savku}. \\

\noindent In this work, we also utilize the foundations of stochastic differential games and take stochastic optimal control and regime-switches together. Many authors have paid attention to this combination as well, see \cite{elliott1, elliott3, savku, ma, shen, zhang, deepa, bui} and references therein. Such problems can be solved with the methods of both dynamic programing principle and stochastic maximum principle. Particularly, we focus on an application of nonzero-sum stochastic differential game, for which stochastic maximum principle has been preferred as a solution technique. In this sense, we want to mention \cite{pamen, zhang} and \cite{deepa}.\\

\noindent In \cite{pamen}, authors develop  necessary and sufficient maximum principles for a Markov regime-switching forward-backward zero-sum and nonzero-sum stochastic differential games. Then, they provide an application for zero-sum game formulation, which describes a robust utility maximization under a relative entropy penalty and find optimal investment of an insurance firm under model uncertainty. But they do not give an example for nonzero-sum game formulation. In \cite{zhang}, authors investigate optimal dividend strategies for two insurance companies and model their work with a stopping time problem via a regime-switching diffusion process and provide a verification theorem. In \cite{deepa}, authors study on an optimal control problem of nonzero-sum game mean-field delayed Markov regime-switching forward-backward stochastic system with Lévy processes associated with Teugels martingales over an infinite time horizon. In this context, they provide necessary and sufficient maximum principles of these type problems. Moreover, they define a single state process of the system for both of the players and try to maximize cost functionals of each player. \\

\noindent In our work, we extend Theorem 11.3.1 in \cite{oksendal2}, which has been proved for stochastic optimal control problems without regime-swithches by the authors. Later, \cite{dahl1} have applied this theorem for an application of the effects of inflation and wage risk on optimal consumption via stochastic maximum principle. Hence, our main contributions are to develop corresponding theorems for a zero-sum and a nonzero-sum stochastic differential game formulations and generalizing state processes of the system to a Markov regime-switching jump-diffusion environment. Our theorems can be applied with both of the dynamic programming principle and stochastic maximum principle methods as well.\\

\noindent This paper is organized as follows: In Section \ref{sec:2}, we provide the details of the model dynamics. Then, we introduce our Markov regime-swithcing jump-diffusion process, which is going to correspond to the state process of the system in our game theoretical application. In Section \ref{sec:3}, we extend Theorem 11.3.1 in \cite{oksendal2} to develop techniques in oerder to find the saddle point of a zero-sum game. In Section \ref{sec:4}, we generalize Theorem 11.3.1 in \cite{oksendal2} for  the Nash equilibrium concept, which presents the stochastic optimal control processes of a nonzero-sum game formulation. In Section \ref{apply}, we investigate a cooperation between a bank and an insurance company via a nonzero-sum stochastic differential game method. While the company makes a decision for the optimal dividend payment against the best decision of the bank, the bank tries to determine optimal appreciation rate for its cash flow corresponding to best action of the company, and vice versa. In Section \ref{conc}, we provide an insight about our results. Finally, a version of sufficient maximum principle theorem with all required technical conditions can be found in Apendix.

\section{Preliminaries}
\label{sec:2}

Throughout this work, we assume that the maturity time $T>0$ is finite. Let $(\Omega,\mathbb{F},\left(\mathcal{F}_{t}\right)_{t\geq 0},\mathbb{P})$ be a complete probability space, where $\left(\mathcal{F}_{t}\right)_{t\geq 0}$ is a right-continuous, $\mathbb{P}$-completed filtration and $\mathbb{F}=\left(\mathcal{F}_{t}:t\in [0,T]\right)$ is generated by an $M$-dimensional Brownian motion $W(\cdot)$, an $L$-dimensional Poisson random measure $N(\cdot,\cdot)$ and a $D$-state Markov chain $\alpha(\cdot)$. It is assumed that these processes are independent of each other and adapted to $\mathbb{F}$. \\
Let $\left(\alpha(t): t\in [0,T]\right)$ be a continuous-time, finite-state Markov chain. We can choose a time-homogenous or a time-inhomogenous Markov chain depending on the application that we purpose to formulate. Moreover, based on the specific problem, the chain may be reducible or irreducible.\\
Let us represent the canonical state space of the finite-state Markov chain $\alpha(t)$ by $S=\left\{e_{1},e_{2},...,e_{D}\right\}$, where $D\in \mathbb{N}$, $e_{i}\in\mathbb{R}^{D}$ and the $j$th component of $e_{i}$ is the Kronecker delta $\delta_{ij}$ for each pair of $i,j=1,2,...,D$. \\
The generator of the chain under $\mathbb{P}$ is defined by $\Lambda:=[\mu_{ij}(t)]_{i,j=1,2,...,D}, \ t\in [0,T]$. For each $i,j=1,2,...,D$, $\mu_{ij}(t)$ is the transition intensity of the chain from each state $e_{i}$ to state $e_{j}$ at time $t$. For $i\neq j$, $\mu_{ij}(t)\geq 0$ and $\sum_{j=1}^{D}\mu_{ij}(t)=0$; hence, $\mu_{ii}(t)\leq 0$. \\
By \cite{eam:13}-Appendix B, we know that there is a semimartingale representation for a Markov chain $\alpha$ as follows:
\begin{equation*}
\label{eq:mart}
\alpha(t)=\alpha(0)+\int_{0}^{t}\Lambda^{T}\alpha(u)du+M(t), 
\end{equation*}
where $\left(M(t):t\in [0,T]\right)$ is an $\mathbb{R}^{D}$-valued $(\mathbb{F},\mathbb{P})$-martingale and $\Lambda^{T}$ describes the transpose of the matrix. \\
In this work, we utilize a set of Markov jump martingales associated with the chain $\alpha$ as developed in \cite{zes:02}: \\
Let $J^{ij}(t)$ represent the number of the jumps from state $i$ to state $j$ up to and included time $t$ for each $i,j=1,2,...,D$, with $i\neq j$ and $t\in [0,T]$. Then,
\begin{align*}
J^{ij}(t)&:=\sum \limits_{0<s\leq t}\left\langle \alpha(s-),e_{i}\right\rangle \left\langle \alpha(s),e_{j}\right\rangle  \\
&=\sum \limits_{0<s\leq t}\left\langle \alpha(s-),e_{i}\right\rangle \left\langle \alpha(s)-\alpha(s-),e_{j}\right\rangle \\ 
&=\int_{0}^{t}\left\langle \alpha(s-),e_{i}\right\rangle \left\langle d\alpha(s),e_{j}\right\rangle  \\
&=\int_{0}^{t}\left\langle \alpha(s-),e_{i}\right\rangle \left\langle \Lambda^{T}\alpha(s),e_{i}\right\rangle ds+\int_{0}^{t}\left\langle \alpha(s-),e_{i}\right\rangle \left\langle dM(s),e_{j}\right\rangle \\
&=\int_{0}^{t}\mu_{ij}(s)\left\langle \alpha(s-),e_{i}\right\rangle ds+m_{ij}(t),
\end{align*}
where the processes $m_{ij}$'s are $(\mathbb{F},\mathbb{P})$-martingales and called the basic martingales associated with the chain $\alpha$. For each fixed $j=1,2,...,D$, let $\Phi_{j}$ be the number of the jumps into state $e_{j}$ up to time $t$. Then,
\begin{align*}
\Phi_{j}(t)&:=\sum \limits_{i=1,i\neq j}^{D}J^{ij}(t) \\
&=\sum \limits_{i=1,i\neq j}^{D} \int_{0}^{t}\mu_{ij}(s)\left\langle \alpha(s-),e_{i}\right\rangle ds+ \tilde{\Phi}_{j}(t).
\end{align*}
Let us define $\tilde{\Phi}_{j}(t):=\sum \limits_{i=1,i\neq j}^{D}m_{ij}(t)$ and $\mu_{j}(t):=\sum \limits_{i=1,i\neq j}^{D}\int_{0}^{t}\mu_{ij}(s)\left\langle \alpha(s-),e_{i}\right\rangle ds$; then for each $j=1,2,...,D$,
\begin{equation*}
\tilde{\Phi}_{j}(t)=\Phi_{j}(t)-\mu_{j}(t)
\end{equation*}
is an $(\mathbb{F},\mathbb{P})$-martingale. By $\tilde{\Phi}(t)=(\tilde{\Phi}_{1}(t),\tilde{\Phi}_{2}(t),...,\tilde{\Phi}_{D}(t))^{T}$, we represent a compensated random measure on $([0,T]\times S,\mathcal{B}([0,T])\otimes \mathcal{B}_{S})$, where $\mathcal{B}_{S}$ is a $\sigma$-field of $S$.\\
Moreover, another description of such a martingale representation for a random measure generated by a Markov chain can be found in \cite{asmus}-Appendix A.3 within the framework of actuarial science.\\
Furthermore, let $\mathcal{B}_{0}$ be the Borel $\sigma$-field generated by an open subset of $\mathbb{R}_{0}:=\mathbb{R}\setminus \left\{0\right\}$, whose closure does not contain the point $0$. We define
\begin{equation*}
\tilde{N}_{i}(dt,dz):=N_{i}(dt,dz)-\nu^{i}(dz)dt, \qquad i=1,2,\ldots,L,
\end{equation*}
which are compensated Poisson random measures and $(N_{i}(dt,dz):t\in [0,T],z\in\mathbb{R}_{0})$'s are independent Poisson random measures on $([0,T]\times \mathbb{R}_{0},\mathcal{B}([0,T])\otimes \mathcal{B}_{0})$ and $\nu^{i}(dz)=(\nu_{e_{1}}^{i}(dz),\nu_{e_{2}}^{i}(dz),\ldots,\nu_{e_{D}}^{i}(dz))^{T}$'s are L\'evy densities of jump sizes of the random measure $N_{i}(dt,dz)$ \ for $i=1,2,\ldots,D$.\\
Now, let us describe the state process of the system as a Markov regime-switching jump-diffusion process:
\begin{align}
Y(t)=&\ b(t,Y(t),\alpha(t),u_{1}(t),u_{2}(t))dt  \nonumber\\
&+\sigma(t,Y(t),\alpha(t),u_{1}(t),u_{2}(t))dW(t) \nonumber\\
&+\int_{\mathbb{R}_{0}}\eta(t,Y(t-),\alpha(t-),u_{1}(t-),u_{2}(t-),z)\tilde{N}_{\alpha}(dt,dz) \nonumber \\
&+\gamma(t,Y(t-),\alpha(t-),u_{1}(t-),u_{2}(t-))d\tilde{\Phi}(t), \qquad t\in [0,T],  \label{eq:3.1}\\
Y(0)=&\ y_{0}\in \mathbb{R}^{N} \label{3.1.1},
\end{align}
where $\mathcal{U}_{1}$ and $\mathcal{U}_{2}$ are non-empty subsets of $\mathbb{R}^{N}$ and $u_{1}\in \mathcal{U}_{1}$ and $u_{2}\in \mathcal{U}_{2}$ are $\mathcal{F}_{t}$-predictable, c\'adl\'ag (right continuous with left limits) control processes such that 
\begin{equation*}
E\biggl[\int_{0}^{T}\left|u_{k}(t)\right|^{2}dt\biggr]<\infty, \qquad k=1,2.
\end{equation*} 
Furthermore,
\begin{align*}
 &b:[0,T]\times \mathbb{R}^{N}\times S\times \mathcal{U}_{1 }\times \mathcal{U}_{2}\rightarrow \mathbb{R}^{N}, \quad \sigma:[0,T]\times \mathbb{R}^{N}\times S\times \mathcal{U}_{1}\times \mathcal{U}_{2}\rightarrow \mathbb{R}^{N\times M},\\
&\eta:[0,T]\times \mathbb{R}^{N}\times S\times \mathcal{U}_{1}\times \mathcal{U}_{2}\times \mathbb{R}_{0}\rightarrow \mathbb{R}^{N\times L}, \quad
\gamma:[0,T]\times \mathbb{R}^{N}\times S\times \mathcal{U}_{1}\times \mathcal{U}_{2}\rightarrow \mathbb{R}^{N\times D} 
\end{align*}
are given measurable functions with respect to \ $\mathbb{F}$ \ such that
\begin{align*}
&\int_{0}^{T}\biggl\{\left|b(t,Y(t),\alpha(t),u_{1}(t),u_{2}(t))\right|+\left|\sigma(t,Y(t),\alpha(t),u_{1}(t),u_{2}(t))\right|^{2}\\
&\quad+\int_{\mathbb{R}_{0}}\left|\eta(t,Y(t-),\alpha(t-),u_{1}(t-),u_{2}(t-),z)\right|^{2}\nu(dz)
\end{align*}
\begin{align*}
&\quad+\sum_{j=1}^{D}\left|\gamma(t,Y(t-),\alpha(t-),u_{1}(t-),u_{2}(t-))\right|^{2}\mu_{j}(t)\biggr\}dt<\infty.
\end{align*}
Moreover, let $f:[0,T]\times \mathbb{R}^{N}\times S\times \mathcal{U}_{1}\times \mathcal{U}_{2}\rightarrow \mathbb{R}$, called \textit{profit rate}, and \ $g:\mathbb{R}^{N}\times S\rightarrow \mathbb{R}$, called \textit{terminal gain} or \textit{bequest function}, be $C^{1}$ functions with respect to $y$. Then, we can define the performance (objective) functional as follows:
\begin{equation*}
J(y, e_{i},u_{1},u_{2})=E^{y, e_{i}}\biggl[\int_{0}^{T}f(s,Y(s),\alpha(s),u_{1}(s),u_{2}(s))ds+g(Y^{u_{1},u_{2}}(T),\alpha(T))\biggr], 
\end{equation*}
for each $i=1,2,\ldots,D.$
We call the control processes $(u_{1},u_{2})$ are admissible and assume that $\Theta_{1}$ and $\Theta_{2}$ are given families of admissible control processes of $u_{1}\in \mathcal{U}_{1}$ and $u_{2}\in \mathcal{U}_{2}$, respectively, if the following conditions are satisfied: 
\begin{enumerate}
\item There exists a unique strong solution of the state process $Y(t)$ introduced in Equations (\ref{eq:3.1})-(\ref{3.1.1}) (see Proposition 7.1 in \cite{c:56} for an existence-uniqueness theorem of such a system). 
\item $E\biggl[\int_{0}^{T}|f(t,Y(t),\alpha(t),u_{1}(t),u_{2}(t))|dt+|g(Y^{u_{1},u_{2}}(T),\alpha(T))|\biggr]<\infty.$ 
\end{enumerate}


\section{A Zero-Sum Stochastic Differential Game Approach}
\label{sec:3}
Firstly, let us remember the mathematical definition of a saddle point, which are the optimal control processes $(u_{1}^{*},u_{2}^{*})\in  \Theta_{1}\times \Theta_{2}$ \ of a zero-sum stochastic differential game problem (if they exist):  \\
As we described in \cite{savku}, assume that 
\begin{equation*}
J(y, e_{i},u_{1}^{*},u_{2}^{*})\geq J(y, e_{i},u_{1},u_{2}^{*}) \ \hbox{for all \ $u_{1}\in \Theta_{1}$, \  $e_{i}\in S, \ i=1,2,\ldots, D$},
\end{equation*}
where we define:
\begin{equation*}
J(y, e_{i},u_{1}^{*},u_{2}^{*})= \sup_{u_{1}\in \Theta_{1}}J(y, e_{i},u_{1},u_{2}^{*}).
\end{equation*}
Furthermore, suppose that 
\begin{equation*}
J(y, e_{i},u_{1}^{*},u_{2}^{*})\leq J(y, e_{i},u_{1}^{*},u_{2}) \ \hbox{for all \ $u_{2}\in \Theta_{2}$ \ $e_{i}\in S, \ i=1,2,\ldots, D$},
\end{equation*}
where we specify:
\begin{equation*}
J(y, e_{i},u_{1}^{*},u_{2}^{*})= \inf_{u_{2}\in \Theta_{2}}J(y, e_{i},u_{1}^{*},u_{2}).
\end{equation*}
Then, $(u_{1}^{*},u_{2}^{*})$ is a saddle point of a zero-sum stochastic differential game and 
\begin{equation*}
\phi(y,e_{i})=J(y, e_{i},u_{1}^{*},u_{2}^{*})=\ \sup_{u_{1}\in  \Theta_{1}}\biggl(\inf_{u_{2}\in \Theta_{2}}J(y, e_{i},u_{1},u_{2})\biggr)=\ \inf_{u_{2}\in \Theta_{2}}\biggl(\sup_{u_{1}\in  \Theta_{1}}J(y, e_{i},u_{1},u_{2})\biggr)
\end{equation*}
for each $e_{i}\in S,\ i=1,2,\ldots, D$. \\
Now, we can express our constrained and unconstrained zero-sum stochastic differential game formulations and their relations.\\
Our constrained zero-sum problem is to find $(u_{1}^{*},u_{2}^{*})$ for the following system:
\begin{align}
\label{conszero}
&\phi(y,e_{i})=\ \sup_{u_{1}\in  \Theta_{1}}\biggl(\inf_{u_{2}\in \Theta_{2}}J(y, e_{i},u_{1},u_{2})\biggr)=\ \inf_{u_{2}\in \Theta_{2}}\biggl(\sup_{u_{1}\in  \Theta_{1}}J(y, e_{i},u_{1},u_{2})\biggr) \nonumber \\
&=\ \sup_{u_{1}\in  \Theta_{1}}\biggl(\inf_{u_{2}\in \Theta_{2}}E^{y, e_{i}}\biggl[\int_{0}^{T}f(s,Y(s),\alpha(s),u_{1}(s),u_{2}(s))ds+g(Y^{u_{1},u_{2}}(T),\alpha(T))\biggr]\biggr) , \
\end{align}
for $i=1,2,\ldots,D$,\ subject to the system (\ref{eq:3.1})-(\ref{3.1.1}) and the constraints,
\begin{equation}
\label{cons1}
\text{\textbf{(i)}}\ E^{y, e_{i}}[M(Y^{u_{1},u_{2}}(T),\alpha(T))]=0 
\end{equation}
or
\begin{equation}
\label{cons2}
\text{\textbf{(ii)}} \ M(Y^{u_{1},u_{2}}(T),\alpha(T))=0 \quad a.s.,
\end{equation}
 where $M:\mathbb{R}^{N}\rightarrow \mathbb{R}$ is a $C^{1}$ function with respect to $y$.  \\
Here, we introduce two types of constraints. For type (\ref{cons1}), it is enough to determine a real valued Lagrange multiplier, while we have to find out a stochastic one for the stochastic constraint (\ref{cons2}). \\
Hence, we clarify the set of stochastic Lagrange multipliers by:
\begin{equation*}
\Delta=\left\{ \lambda:\Omega\rightarrow \mathbb{R}| \lambda\  \text{is} \ \mathcal{F}_{T}-\text{measurable and} \ E[\lambda]<\infty \right\}.
\end{equation*}
Moreover, in this case, we assume that $E[M(Y^{u_{1},u_{2}}(T),\alpha(T))]< \infty$.\\
Now, we can define our unconstrained zero-sum stochastic differential game as follows:
\begin{align}
\label{unconszero}
\phi^{\lambda}(y,e_{i})&=\ \sup_{u_{1}\in  \Theta_{1}}\biggl(\inf_{u_{2}\in \Theta_{2}}J(y, e_{i},u_{1}^{\lambda},u_{2}^{\lambda})\biggr)=\ \inf_{u_{2}\in \Theta_{2}}\biggl(\sup_{u_{1}\in  \Theta_{1}}J(y, e_{i},u_{1}^{\lambda},u_{2}^{\lambda})\biggr) \nonumber \\
&=\ \sup_{u_{1}\in  \Theta_{1}}\biggl(\inf_{u_{2}\in \Theta_{2}}E^{y, e_{i}}\biggl[\int_{0}^{T}f(t,Y(t),\alpha(t),u_{1}(t),u_{2}(t))dt+g(Y^{u_{1},u_{2}}(T),\alpha(T)) \nonumber\\
&\qquad+\lambda M(Y^{u_{1},u_{2}}(T),\alpha(T))\biggr]\biggr).
\end{align}
for $i=1,2,\ldots,D$,\ subject to the system (\ref{eq:3.1})-(\ref{3.1.1}). \\
Let us provide the following theorem for the constraint type (\ref{cons2}):
\begin{theorem}
\label{th1}
Suppose that for all $\lambda \in \Delta_{1}\subset \Delta$, we can find $\phi^{\lambda}(y,e_{i})$, $i=1,2,\ldots, D$, and a saddle point $(u_{1}^{*,\lambda},u_{2}^{*,\lambda})$ solving the unconstrained stochastic control problem (\ref{unconszero}) subject to (\ref{eq:3.1})-(\ref{3.1.1}). Moreover, suppose that there exists $\lambda_{0}\in \Delta_{1}$, such that 
\begin{equation}
\label{th1cons2}
M(Y_{T}^{u_{1}^{*,\lambda_{0}},u_{2}^{*,\lambda_{0}}},e_{i})= 0, \qquad a.s.
\end{equation}
for all $e_{i}\in S, \ i=1,2,\ldots, D$.\\
Then, $\phi(y,e_{i})=\phi^{\lambda_{0}}(y,e_{i}), \ i=1,2,\ldots,D$ and $(u_{1}^{*},u_{2}^{*})=(u_{1}^{*,\lambda_{0}},u_{2}^{*,\lambda_{0}})$ solves the constrained stochastic control problem (\ref{conszero}) subject to (\ref{eq:3.1})-(\ref{3.1.1}) and (\ref{cons2}).
\end{theorem}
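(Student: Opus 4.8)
The plan is to exploit the standard Lagrangian-relaxation duality: the unconstrained game value $\phi^\lambda$ dominates the constrained value $\phi$ for every admissible multiplier, and equality is forced precisely when the constraint is exactly satisfied at the relaxed saddle point. First I would fix an arbitrary $e_i\in S$ and an arbitrary pair $(u_1,u_2)\in\Theta_1\times\Theta_2$ that is \emph{feasible} for the constrained problem, i.e. satisfies $M(Y^{u_1,u_2}(T),\alpha(T))=0$ a.s. For such a pair, the penalty term $\lambda\,M(Y^{u_1,u_2}(T),\alpha(T))$ vanishes identically, so $J(y,e_i,u_1,u_2)$ in the constrained functional equals the integrand of the unconstrained functional $J$ augmented by the penalty. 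Taking the $\sup$ over $u_1$ and $\inf$ over $u_2$ only over feasible pairs can only be smaller (in the appropriate minimax sense) than taking them over all of $\Theta_1\times\Theta_2$, which gives the inequality $\phi(y,e_i)\le \phi^{\lambda}(y,e_i)$ for every $\lambda\in\Delta_1$; in particular for $\lambda=\lambda_0$.

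Next I would establish the reverse inequality at $\lambda_0$ by producing a feasible pair that attains $\phi^{\lambda_0}$. The hypothesis (\ref{th1cons2}) says exactly that the saddle point $(u_1^{*,\lambda_0},u_2^{*,\lambda_0})$ of the unconstrained problem satisfies the hard constraint (\ref{cons2}), hence it is feasible for the constrained problem. For this particular pair the penalty term again vanishes, so
\[
\phi^{\lambda_0}(y,e_i)=J(y,e_i,u_1^{*,\lambda_0},u_2^{*,\lambda_0})+\lambda_0\,E^{y,e_i}\!\left[M(Y^{u_1^{*,\lambda_0},u_2^{*,\lambda_0}}(T),\alpha(T))\right]=J(y,e_i,u_1^{*,\lambda_0},u_2^{*,\lambda_0}).
\]
Since $(u_1^{*,\lambda_0},u_2^{*,\lambda_0})$ is a feasible pair for the constrained game, its value $J(y,e_i,u_1^{*,\lambda_0},u_2^{*,\lambda_0})$ is a candidate over which the constrained minimax is taken, so $\phi(y,e_i)\ge J(y,e_i,u_1^{*,\lambda_0},u_2^{*,\lambda_0})=\phi^{\lambda_0}(y,e_i)$. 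Combining with the first step yields $\phi(y,e_i)=\phi^{\lambda_0}(y,e_i)$, and the chain of (in)equalities shows that $(u_1^{*},u_2^{*}):=(u_1^{*,\lambda_0},u_2^{*,\lambda_0})$ actually realizes this common value, hence solves the constrained problem (\ref{conszero}) subject to (\ref{eq:3.1})--(\ref{3.1.1}) and (\ref{cons2}).

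The one genuinely delicate point — and the step I expect to be the main obstacle — is handling the minimax structure carefully rather than a plain $\sup$. One must check that restricting both players to the feasible set is compatible with the order of $\sup_{u_1}\inf_{u_2}$ (and with the assumed equality $\sup\inf=\inf\sup$ for both $\phi$ and $\phi^{\lambda}$), so that the inequality $\phi\le\phi^{\lambda}$ is not corrupted by the inner optimization: in particular one should verify that for a \emph{fixed} $u_2^{*,\lambda_0}$ the constraint-satisfying property is inherited along the relevant comparison, which is why the statement presumes a genuine saddle point $(u_1^{*,\lambda},u_2^{*,\lambda})$ of the unconstrained game for each $\lambda$ rather than merely optimal one-sided responses. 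Once the saddle-point defining inequalities are invoked on the $\lambda_0$-problem and the penalty term is seen to drop out for every feasible competitor, the argument closes. All remaining items — measurability of $\lambda_0$, finiteness of $E[M]$, admissibility of $(u_1^{*,\lambda_0},u_2^{*,\lambda_0})$ — are guaranteed by the standing assumptions ($\lambda_0\in\Delta_1\subset\Delta$, the integrability hypotheses on $f,g,M$) and require no further work.
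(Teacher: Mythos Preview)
Your duality sandwich ($\phi\le\phi^{\lambda}$ for all $\lambda$, then $\phi\ge\phi^{\lambda_0}$ via a feasible candidate) is the right reflex for a single $\sup$ or $\inf$, but it does not survive the minimax structure, and this is not merely a ``delicate point'' to be checked afterwards --- it is where your argument actually breaks. In step~1, restricting the \emph{inner} $\inf_{u_2}$ to feasible controls makes that infimum \emph{larger}, not smaller, so you cannot conclude $\phi\le\phi^{\lambda}$ from set inclusion. In step~3, knowing that $(u_1^{*,\lambda_0},u_2^{*,\lambda_0})$ is a feasible pair does \emph{not} give $\phi\ge J(y,e_i,u_1^{*,\lambda_0},u_2^{*,\lambda_0})$: for a $\sup\inf$, a single feasible point yields neither an upper nor a lower bound on the value.

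The paper avoids this entirely by not attempting a sandwich. It works directly with the two saddle-point inequalities of the unconstrained $\lambda_0$-problem,
\[
J^{\lambda_0}(y,e_i,u_1,u_2^{*,\lambda_0})\ \le\ \phi^{\lambda_0}(y,e_i)\ \le\ J^{\lambda_0}(y,e_i,u_1^{*,\lambda_0},u_2),
\]
observes that the penalty $\lambda_0 M(\cdot)$ vanishes both at the centre (by hypothesis) and at any competitor that is feasible for the constrained game, and thereby obtains the saddle-point inequalities $J(y,e_i,u_1,u_2^{*})\le\phi^{\lambda_0}\le J(y,e_i,u_1^{*},u_2)$ for the \emph{constrained} functional. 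From these one reads off $\inf\sup\le\phi^{\lambda_0}\le\sup\inf$, and the general inequality $\sup\inf\le\inf\sup$ closes the loop. You gesture at exactly this mechanism in your final paragraph; that is the whole proof, not a verification of your steps~1--3.
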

\begin{proof}
By definition of the saddle-point, we have 
\begin{align}
\phi^{\lambda}(y,e_{i})&=\ J(y,e_{i},u_{1}^{*,\lambda},u_{2}^{*,\lambda})= E^{y, e_{i}}\biggl[\int_{0}^{T}f(t,Y_{t}^{u_{1}^{*,\lambda},u_{2}^{*,\lambda}},e_{i},u_{1}^{*,\lambda},u_{2}^{*,\lambda})dt+ g(Y_{T}^{u_{1}^{*,\lambda},u_{2}^{*,\lambda}},\alpha_{T}) \nonumber\\
&\qquad \qquad +\lambda M(Y_{T}^{u_{1}^{*,\lambda},u_{2}^{*,\lambda}},\alpha_{T})\biggr] \nonumber \\
&\geq J(y,e_{i},u_{1}^{\lambda},u_{2}^{*,\lambda})=E^{y, e_{i}}\biggl[\int_{0}^{T}f(t,Y_{t}^{u_{1}^{\lambda},u_{2}^{*,\lambda}},e_{i},u_{1}^{\lambda},u_{2}^{*,\lambda})dt+ g(Y_{T}^{u_{1}^{\lambda},u_{2}^{*,\lambda}},\alpha_{T}) \nonumber \\
&\qquad \qquad +\lambda M(Y_{T}^{u_{1}^{\lambda},u_{2}^{*,\lambda}},\alpha_{T})\biggr].\label{p2}
\end{align} 
For the optimal strategy of Player 2, $u_{2}^{*,\lambda}\in \Theta_{2}, \ \lambda\in \Delta_{1}$, in particular if $\lambda=\lambda_{0}$ and since $u_{1}\in \Theta_{1}$ is feasible in the constrained control problem, then by (\ref{th1cons2})
\begin{equation*}
M(Y_{T}^{u_{1}^{*,\lambda_{0}},u_{2}^{*,\lambda_{0}}},e_{i}) = 0 =\ M(Y_{T}^{u_{1},u_{2}^{*}},e_{i}),\qquad \text{for} \quad i=1,2,\ldots,D . \\
\end{equation*}
By (\ref{p2}),
\begin{equation}
\label{p21}
\phi^{\lambda_{0}}(y,e_{i})=\ J(y,e_{i},u_{1}^{*,\lambda_{0}},u_{2}^{*,\lambda_{0}})\geq J(y,e_{i},u_{1},u_{2}^{*}),
\end{equation}
for all $u_{1}\in \Theta_{1}$ and $e_{i}\in S, \ i=1,2,\ldots,D$.\\
Moreover, we know that  
\begin{align}
\phi^{\lambda}(y,e_{i})&=\ J(y,e_{i},u_{1}^{*,\lambda},u_{2}^{*,\lambda})= E^{y, e_{i}}\biggl[\int_{0}^{T}f(t,Y_{t}^{u_{1}^{*,\lambda},u_{2}^{*,\lambda}},e_{i},u_{1}^{*,\lambda},u_{2}^{*,\lambda})dt+ g(Y_{T}^{u_{1}^{*,\lambda},u_{2}^{*,\lambda}},\alpha_{T}) \nonumber\\
&\qquad \qquad +\lambda M(Y_{T}^{u_{1}^{*,\lambda},u_{2}^{*,\lambda}},\alpha_{T})\biggr] \nonumber \\
&\leq J(y,e_{i},u_{1}^{*,\lambda},u_{2}^{\lambda})=E^{y, e_{i}}\biggl[\int_{0}^{T}f(t,Y_{t}^{u_{1}^{*,\lambda},u_{2}^{\lambda}},e_{i},u_{1}^{*,\lambda},u_{2}^{\lambda})dt+ g(Y_{T}^{u_{1}^{*,\lambda},u_{2}^{\lambda}},\alpha_{T}) \nonumber \\
&\qquad \qquad +\lambda M(Y_{T}^{u_{1}^{*,\lambda},u_{2}^{\lambda}},\alpha_{T})\biggr].\label{p3}
\end{align} 
for all $u_{2}\in \Theta_{2}$ and $e_{i}\in S, \ i=1,2,\ldots,D$.\\
For the optimal strategy of Player 1, $u_{1}^{*,\lambda}\in \Theta_{1}, \ \lambda\in \Delta_{1}$, in particular if $\lambda=\lambda_{0}$ and since $u_{2}\in \Theta_{2}$ is feasible in the constrained control problem, then by (\ref{th1cons2})
\begin{equation*}
M(Y_{T}^{u_{1}^{*,\lambda_{0}},u_{2}^{*,\lambda_{0}}},e_{i})= 0 =M(Y_{T}^{u_{1}^{*},u_{2}},e_{i}), \qquad a.s. \ \text{for} \quad i=1,2,\ldots,D. \\
\end{equation*}
By (\ref{p3}),
\begin{equation}
\label{p32}
\phi^{\lambda_{0}}(y,e_{i})=\ J(y,e_{i},u_{1}^{*,\lambda_{0}},u_{2}^{*,\lambda_{0}})\leq J(y,e_{i},u_{1}^{*},u_{2}),
\end{equation}
for all $u_{2}\in \Theta_{2}$ and $e_{i}\in S, \ i=1,2,\ldots,D$.\\
Consequently, we obtain by (\ref{p21})-(\ref{p32})
\begin{equation*}
J(y,e_{i},u_{1},u_{2}^{*})\leq J(y,e_{i},u_{1}^{*,\lambda_{0}},u_{2}^{*,\lambda_{0}})=\ \phi^{\lambda_{0}}(y,e_{i})\leq J(y,e_{i},u_{1}^{*},u_{2})
\end{equation*}
for any feasible $(u_{1},u_{2})\in \Theta_{1}\times \Theta_{2}$ \ and for all \  $e_{i}\in S, \ i=1,2,\ldots,D$. \\
Then,
\begin{equation*}
J(y,e_{i},u_{1}^{*,\lambda_{0}},u_{2}^{*,\lambda_{0}})\leq \inf_{u_{2}\in \Theta_{2}} J(y,e_{i},u_{1}^{*},u_{2})\leq \sup_{u_{1}\in \Theta_{1}}\biggl(\inf_{u_{2}\in \Theta_{2}}J(y, e_{i},u_{1},u_{2})\biggr).
\end{equation*}
Moreover,
\begin{equation*}
J(y,e_{i},u_{1}^{*,\lambda_{0}},u_{2}^{*,\lambda_{0}})\geq \sup_{u_{1}\in \Theta_{1}}J(y,e_{i},u_{1},u_{2}^{*})\geq \inf_{u_{2}\in \Theta_{2}}\biggl(\sup_{u_{1}\in \Theta_{1}}J(y,e_{i},u_{1},u_{2})\biggr)
\end{equation*}
Hence, we obtain:
\begin{equation*}
\sup_{u_{1}\in \Theta_{1}}\biggl(\inf_{u_{2}\in \Theta_{2}}J(y,e_{i},u_{1},u_{2})\biggr)\geq \inf_{u_{2}\in \Theta_{2}}\biggl(\sup_{u_{1}\in \Theta_{1}}J(y, e_{i},u_{1},u_{2})\biggr).
\end{equation*}
Since we always have 
\begin{equation*}
\sup_{u_{1}\in \Theta_{1}}\biggl(\inf_{u_{2}\in \Theta_{2}}J(y,e_{i},u_{1},u_{2})\biggr)\leq \inf_{u_{2}\in \Theta_{2}}\biggl(\sup_{u_{1}\in \Theta_{1}}J(y, e_{i},u_{1},u_{2})\biggr),
\end{equation*}
finally, we prove
\begin{equation*}
\phi(y, e_{i})=\sup_{u_{1}\in  \Theta_{1}}\biggl(\inf_{u_{2}\in \Theta_{2}}J(y, e_{i},u_{1},u_{2})\biggr)=\inf_{u_{2}\in \Theta_{2}}\biggl(\sup_{u_{1}\in  \Theta_{1}}J(y, e_{i},u_{1},u_{2})\biggr)=\phi^{\lambda_{0}}(y,e_{i}), 
\end{equation*}
for \ $i=1,2,\ldots,D$.\\
This completes the proof.
\end{proof}
We can prove the following theorem similarly for the constraint type (\ref{cons1}).
\begin{theorem}
\label{th2}
Suppose that for all $\lambda \in K\subset \mathbb{R}$, we can find $\phi^{\lambda}(y,e_{i})$, $i=1,2,\ldots, D$, and a saddle point $(u_{1}^{*,\lambda},u_{2}^{*,\lambda})$ solving the unconstrained stochastic control problem (\ref{unconszero}) subject to (\ref{eq:3.1})-(\ref{3.1.1}). Moreover, suppose that there exists $\lambda_{0}\in K$, such that 
\begin{equation*}
\label{th2cons1}
E[M(Y_{T}^{u_{1}^{*,\lambda_{0}},u_{2}^{*,\lambda_{0}}},e_{i})]= 0, 
\end{equation*}
for all $e_{i}\in S, \ i=1,2,\ldots, D$.\\
Then, $\phi(y,e_{i})=\phi^{\lambda_{0}}(y,e_{i}), \ i=1,2,\ldots,D$ and $(u_{1}^{*},u_{2}^{*})=(u_{1}^{*,\lambda_{0}},u_{2}^{*,\lambda_{0}})$ solves the constrained stochastic control problem (\ref{conszero}) subject to (\ref{eq:3.1})-(\ref{3.1.1}) and (\ref{cons1}).
\end{theorem}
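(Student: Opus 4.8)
\textbf{Proof plan for Theorem \ref{th2}.}

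The plan is to mirror the argument of Theorem \ref{th1} almost verbatim, replacing the almost-sure identity \eqref{th1cons2} with the expectation constraint and exploiting the fact that the Lagrangian penalty $\lambda M$ now contributes a constant to the performance functional along any feasible control pair. First I would fix $\lambda = \lambda_0 \in K \subset \mathbb{R}$ and $e_i \in S$, and record the two saddle-point inequalities for the unconstrained problem \eqref{unconszero}: for all $u_1 \in \Theta_1$,
\begin{equation*}
J(y,e_i,u_1^{*,\lambda_0},u_2^{*,\lambda_0}) + \lambda_0 E^{y,e_i}[M(Y_T^{u_1^{*,\lambda_0},u_2^{*,\lambda_0}},\alpha_T)] \geq J(y,e_i,u_1,u_2^{*,\lambda_0}) + \lambda_0 E^{y,e_i}[M(Y_T^{u_1,u_2^{*,\lambda_0}},\alpha_T)],
\end{equation*}
and symmetrically for all $u_2 \in \Theta_2$ with the inequality reversed and the roles of the players swapped. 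Here I am writing $J$ for the unpenalized functional and displaying the penalty term separately, since that is where the expectation constraint will be used.

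Next I would restrict attention to feasible control pairs, i.e. those $(u_1,u_2) \in \Theta_1 \times \Theta_2$ satisfying \eqref{cons1}, so that the term $\lambda_0 E^{y,e_i}[M(Y_T^{u_1,u_2},\alpha_T)]$ vanishes. Combined with the hypothesis $E[M(Y_T^{u_1^{*,\lambda_0},u_2^{*,\lambda_0}},e_i)] = 0$, the penalty terms drop out of both saddle-point inequalities whenever the competing control is feasible. This yields
\begin{equation*}
J(y,e_i,u_1,u_2^{*}) \leq J(y,e_i,u_1^{*,\lambda_0},u_2^{*,\lambda_0}) = \phi^{\lambda_0}(y,e_i) \leq J(y,e_i,u_1^{*},u_2)
\end{equation*}
for every feasible $(u_1,u_2)$, exactly as in the proof of Theorem \ref{th1}, where I write $(u_1^{*},u_2^{*}) := (u_1^{*,\lambda_0},u_2^{*,\lambda_0})$. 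A subtle point I would need to check is that $u_2^{*,\lambda_0}$ itself is feasible for the constrained problem, which is precisely the content of the hypothesis $E[M(Y_T^{u_1^{*,\lambda_0},u_2^{*,\lambda_0}},e_i)] = 0$, so the sandwiching statement above is not vacuous.

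Finally I would close the argument by the standard minimax manipulation used in Theorem \ref{th1}: taking $\inf_{u_2}$ and then $\sup_{u_1}$ on the right, and $\sup_{u_1}$ then $\inf_{u_2}$ on the left, gives $\sup_{u_1}\inf_{u_2} J \geq \phi^{\lambda_0}(y,e_i) \geq \inf_{u_2}\sup_{u_1} J$, which together with the trivial reverse inequality $\sup\inf \leq \inf\sup$ forces equality throughout and identifies the common value with $\phi^{\lambda_0}(y,e_i)$. I do not expect any serious obstacle: the only place the argument genuinely differs from Theorem \ref{th1} is that the penalty cancels in expectation rather than pathwise, so the cancellation happens \emph{after} taking $E^{y,e_i}[\cdot]$ rather than before — but since $J$ is itself an expectation this makes no difference. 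The integrability assumption $E[M(Y_T^{u_1,u_2}(T),\alpha(T))] < \infty$ stated before the theorem is what guarantees all these expectations are well defined, so no additional hypotheses are needed.
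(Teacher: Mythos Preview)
Your proposal is correct and follows essentially the same approach as the paper, which does not give a separate proof of Theorem~\ref{th2} but simply states that it can be proved similarly to Theorem~\ref{th1}. Your explicit identification of where the expectation constraint replaces the almost-sure one (the penalty cancels after taking $E^{y,e_i}[\cdot]$ rather than pathwise) is exactly the adaptation the paper has in mind.
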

In this section, we extended Theorem 11.3.1 in \cite{oksendal2} to a zero-sum stochastic differential game formulation within the framework of regime-switches.


\section{A Nonzero-Sum Stochastic Differential Game Approach}
\label{sec:4}

By solving a nonzero-sum stochastic differential game, we purpose to find a pair of optimal control processes, which corresponds to \textit{Nash equilibrium} of the two player game, if it exists. Remember that Nash equilibrium is a self-enforcing strategy; i.e., each player knows that unilateral profitable deviation is not possible. This also means that each player's strategy is optimal or a best response against the other one's.   \\
Then, a mathematical definition of the Nash equilibrium can be introduced as we described in \cite{savku}:\\
Let $u_{1}\in \Theta_{1}$ and $u_{2}\in \Theta_{2}$ be two admissible control processes for Player 1 and Player 2, respectively. We define the performance criterion for each player as follows:
\begin{equation*}
J_{k}(y, e_{i},u_{1},u_{2})=E^{y, e_{i}}\biggl[\int_{0}^{T}f_{k}(s,Y(s),\alpha(s),u_{1}(s),u_{2}(s))ds+g_{k}(Y^{u_{1},u_{2}}(T),\alpha(T))\biggr]
\end{equation*}  
for each $e_{i}\in S$, \ $i=1,2,\ldots,D$, and both purpose to maximize their payoffs with respect to other player's best action as follows:
\begin{align}
&J_{1}(y, e_{i},u_{1}^{*},u_{2}^{*})=\ \sup_{u_{1}\in  \Theta_{1}}J_{1}(y, e_{i},u_{1},u_{2}^{*}), \label{j1}\\
&J_{2}(y, e_{i},u_{1}^{*},u_{2}^{*})=\ \sup_{u_{2}\in  \Theta_{2}}J_{2}(y, e_{i},u_{1}^{*},u_{2}), \label{j2}
\end{align}
for each $e_{i}\in S$ and for all $y\in G$, where $G$ is an open subset of $\mathbb{R}^{N}$ and corresponds to a solvency region for the state processes.
\begin{definition} 
(Definition 1, \cite{savku}) Let us assume that for the optimal strategy of Player 2, $u_{2}^{*}\in \Theta_{2}$, the best response of Player 1 satisfies
\begin{equation*}
J_{1}(y, e_{i},u_{1},u_{2}^{*})\leq J_{1}(y, e_{i},u_{1}^{*},u_{2}^{*}) \ \hbox{for all} \ u_{1}\in \Theta_{1}, \ e_{i}\in S, \ y\in G,
\end{equation*}
and for the optimal strategy of Player 1, $u_{1}^{*}\in \Theta_{1}$, the best response of Player 2 satisfies
\begin{equation*}
J_{2}(y, e_{i},u_{1}^{*},u_{2})\leq J_{2}(y,e_{i},u_{1}^{*},u_{2}^{*}) \ \hbox{for all} \ u_{2}\in \Theta_{2}, \ e_{i}\in S, \ y\in G.
\end{equation*}
Then, the pair of optimal control processes $(u_{1}^{*},u_{2}^{*})\in \Theta_{1}\times \Theta_{2}$ is called a Nash equilibrium for the stochastic differential game of the system (\ref{eq:3.1})-(\ref{3.1.1}) and (\ref{j1})-(\ref{j2}).
\end{definition}
Our constrained nonzero-sum stochastic differential game is to find out $(u_{1}^{*},u_{2}^{*})$ for the problems (\ref{j1})-(\ref{j2}) subject to the system (\ref{eq:3.1})-(\ref{3.1.1}) and 
 \begin{equation}
\label{cons11}
\text{\textbf{(i)}}\ E^{y, e_{i}}[M_{k}(Y^{u_{1},u_{2}}(T),\alpha(T))]=0 
\end{equation}
or
\begin{equation}
\label{cons22}
\text{\textbf{(ii)}} \ M_{k}(Y^{u_{1},u_{2}}(T),\alpha(T))=0 \quad a.s.,
\end{equation}
 where $M_{k}:\mathbb{R}^{N}\rightarrow \mathbb{R}$, \  are \ $C^{1}$ \ functions with respect to\ $y$ \ and we assume that \ $E[M_{k}(Y^{u_{1},u_{2}}(T),\alpha(T))]< \infty$, \ $k=1,2$.\\

Finally, our unconstrained nonzero-sum stochastic differential game problem is described as follows:
\begin{align}
\phi_{k}^{\lambda_{k}}(y,e_{i})=&\ J_{k}(y, e_{i},u_{1}^{*,\lambda_{1}},u_{2}^{*,\lambda_{2}})=\ \sup_{u_{k}\in \Theta_{k}}E^{y, e_{i}}\biggl[\int_{0}^{T}f_{k}(t,Y(t),\alpha(t),u_{1}(t),u_{2}(t))dt \nonumber \\
&\qquad +g_{k}(Y^{u_{1},u_{2}}(T),\alpha(T))+\lambda_{k}M_{k}(Y^{u_{1},u_{2}}(T),\alpha(T))\biggr] \label{unconsnonzero} 
\end{align}
for $k=1,2$ and $e_{i}\in S, \ i=1,2,\ldots, D$, subject to the system (\ref{eq:3.1})-(\ref{3.1.1}).
\begin{theorem} 
\label{th3}
Suppose that for all $\lambda_{k} \in \Delta_{k} \subseteq \Delta$, we can find $\phi_{k}^{\lambda_{k}}(y,e_{i})$, \ $i=1,2,\ldots, D$ \ and \ $k=1,2$, and a Nash equilibrium $(u_{1}^{*,\lambda_{1}},u_{2}^{*,\lambda_{2}})$ solving the unconstrained stochastic control problems (\ref{unconsnonzero}) for each player. Moreover, suppose that there exist $\lambda_{k}^{0}\in \Delta_{k}\subseteq \Delta$, \ $k=1,2$, such that 
\begin{equation}
\label{th3cons2}
M_{1}(Y_{T}^{u_{1}^{*,\lambda_{1}^{0}},u_{2}^{*,\lambda_{2}}},e_{i})= 0 \quad \text{and}\quad  M_{2}(Y_{T}^{u_{1}^{*,\lambda_{1}},u_{2}^{*,\lambda_{2}^{0}}},e_{i})= 0 \quad a.s.,
\end{equation}
for all $e_{i}\in S, \ i=1,2,\ldots, D$.\\
Then, $\phi_{k}(y,e_{i})=\phi_{k}^{\lambda_{k}^{0}}(y,e_{i}), \ k=1,2, \ i=1,2, \ldots, D$ and $(u_{1}^{*},u_{2}^{*})=(u_{1}^{*,\lambda_{1}^{0}},u_{2}^{*,\lambda_{2}^{0}})$ solves the constrained stochastic control problem.
\end{theorem}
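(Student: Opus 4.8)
The plan is to repeat, componentwise, the argument behind Theorem~\ref{th1}: since a Nash equilibrium of the unconstrained game \eqref{unconsnonzero} is, by construction, the conjunction of the two one-sided best-response inequalities (one per player), it suffices to upgrade each of these two inequalities to the corresponding best-response inequality for the constrained problem \eqref{j1}--\eqref{j2} subject to \eqref{eq:3.1}--\eqref{3.1.1} and \eqref{cons22}. First I would fix the multipliers at the distinguished values $\lambda_{1}=\lambda_{1}^{0}$, $\lambda_{2}=\lambda_{2}^{0}$ whose existence is assumed, set $u_{1}^{*}:=u_{1}^{*,\lambda_{1}^{0}}$ and $u_{2}^{*}:=u_{2}^{*,\lambda_{2}^{0}}$, and write out the equilibrium property of $(u_{1}^{*},u_{2}^{*})$ for Player~1: namely that $\phi_{1}^{\lambda_{1}^{0}}(y,e_{i})=J_{1}(y,e_{i},u_{1}^{*},u_{2}^{*})+E^{y,e_{i}}[\lambda_{1}^{0}M_{1}(Y_{T}^{u_{1}^{*},u_{2}^{*}},\alpha_{T})]$ and that this quantity dominates $J_{1}(y,e_{i},u_{1},u_{2}^{*})+E^{y,e_{i}}[\lambda_{1}^{0}M_{1}(Y_{T}^{u_{1},u_{2}^{*}},\alpha_{T})]$ for every $u_{1}\in\Theta_{1}$, together with the symmetric statement for Player~2 (the competitor's control for Player~1 being compared against the fixed $u_{2}^{*,\lambda_{2}^{0}}=u_{2}^{*}$, and vice versa).

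Second, I would invoke the terminal conditions \eqref{th3cons2}. At the equilibrium point $M_{1}(Y_{T}^{u_{1}^{*},u_{2}^{*}},\alpha_{T})=0$ a.s., so the penalty term drops and $\phi_{1}^{\lambda_{1}^{0}}(y,e_{i})=J_{1}(y,e_{i},u_{1}^{*},u_{2}^{*})$; moreover any competitor $u_{1}\in\Theta_{1}$ that is \emph{feasible} for the constrained problem against $u_{2}^{*}$ satisfies $M_{1}(Y_{T}^{u_{1},u_{2}^{*}},\alpha_{T})=0$ a.s., so its penalty term vanishes too and the inequality above collapses to $J_{1}(y,e_{i},u_{1}^{*},u_{2}^{*})\ge J_{1}(y,e_{i},u_{1},u_{2}^{*})$ for all such $u_{1}$. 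Running the identical argument with the roles of the players (and of $M_{1}$, $M_{2}$) interchanged, using the second identity in \eqref{th3cons2}, gives $J_{2}(y,e_{i},u_{1}^{*},u_{2}^{*})\ge J_{2}(y,e_{i},u_{1}^{*},u_{2})$ for all feasible $u_{2}$. These are precisely the two best-response inequalities in the definition of a Nash equilibrium, so $(u_{1}^{*},u_{2}^{*})=(u_{1}^{*,\lambda_{1}^{0}},u_{2}^{*,\lambda_{2}^{0}})$ is a Nash equilibrium of the constrained game; taking the supremum over feasible controls in each inequality and comparing with the displayed identities for $\phi_{k}^{\lambda_{k}^{0}}$ yields $\phi_{k}(y,e_{i})=\phi_{k}^{\lambda_{k}^{0}}(y,e_{i})$ for $k=1,2$ and all $e_{i}\in S$, which completes the proof.

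I expect the genuinely delicate point to reside in the hypotheses rather than in the deduction. Unlike the zero-sum case of Theorem~\ref{th1}, where a single multiplier $\lambda_{0}$ enters, here $u_{1}^{*,\lambda_{1}}$ depends on $\lambda_{2}$ (through $u_{2}^{*,\lambda_{2}}$) and vice versa, so the simultaneous existence of $(\lambda_{1}^{0},\lambda_{2}^{0})\in\Delta_{1}\times\Delta_{2}$ enforcing both terminal constraints is a coupled, fixed-point-type requirement which the theorem simply postulates. Within the proof itself the only care needed is bookkeeping: one must ensure that the competitor $u_{1}$ in Player~1's inequality is always evaluated against the same $u_{2}^{*}=u_{2}^{*,\lambda_{2}^{0}}$ that appears in the constrained feasibility condition (and symmetrically for Player~2), so that the Lagrangian penalties genuinely cancel; one should also note, as a triviality, that $\lambda_{k}^{0}\in\Delta_{k}$, so that $\phi_{k}^{\lambda_{k}^{0}}$ is among the values supplied by the first hypothesis. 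No integrability or regularity estimates beyond those already used for Theorem~\ref{th1} are required.
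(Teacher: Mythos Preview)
Your proposal is correct and follows essentially the same approach as the paper: write out the two one-sided Nash inequalities for the unconstrained (Lagrangian) game at the distinguished multipliers, use \eqref{th3cons2} together with feasibility in the constrained problem to kill both penalty terms, and read off the constrained Nash inequalities and the value identities $\phi_{k}=\phi_{k}^{\lambda_{k}^{0}}$. Your presentation is, if anything, tidier than the paper's in fixing $(\lambda_{1},\lambda_{2})=(\lambda_{1}^{0},\lambda_{2}^{0})$ from the outset, and your closing remarks on the coupled nature of the hypothesis \eqref{th3cons2} go beyond what the paper says but do not affect the argument.
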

\begin{proof}
By definition of Nash equilibrium, we have 
\begin{align}
&J_{1}(y,e_{i},u_{1}^{*,\lambda_{1}},u_{2}^{*,\lambda_{2}})= E^{y, e_{i}}\biggl[\int_{0}^{T}f_{1}(t,Y_{t}^{u_{1}^{*,\lambda_{1}},u_{2}^{*,\lambda_{2}}},e_{i},u_{1}^{*,\lambda_{1}},u_{2}^{*,\lambda_{2}})dt+ g_{1}(Y_{T}^{u_{1}^{*,\lambda_{1}},u_{2}^{*,\lambda_{2}}},\alpha_{T}) \nonumber\\
&\qquad \qquad +\lambda_{1} M_{1}(Y_{T}^{u_{1}^{*,\lambda_{1}},u_{2}^{*,\lambda_{2}}},\alpha_{T})\biggr] \nonumber \\
&\geq J_{1}(y,e_{i},u_{1}^{\lambda_{1}},u_{2}^{*,\lambda_{2}})=E^{y, e_{i}}\biggl[\int_{0}^{T}f_{1}(t,Y_{t}^{u_{1}^{\lambda_{1}},u_{2}^{*,\lambda_{2}}},e_{i},u_{1}^{\lambda_{1}},u_{2}^{*,\lambda_{2}})dt+ g_{1}(Y_{T}^{u_{1}^{\lambda_{1}},u_{2}^{*,\lambda_{2}}},\alpha_{T}) \nonumber \\
&\qquad \qquad +\lambda_{1} M_{1}(Y_{T}^{u_{1}^{\lambda_{1}},u_{2}^{*,\lambda_{2}}},\alpha_{T})\biggr].\label{p1}
\end{align} 
For the optimal strategy of Player 2, $u_{2}^{*,\lambda_{2}}\in \Theta_{2}, \ \lambda_{2}\in \Delta_{2}$, in particular if $\lambda_{1}=\lambda_{1}^{0}$ and since $u_{1}\in \Theta_{1}$ is feasible in the constrained control problem, then by (\ref{cons22}) and (\ref{th3cons2}).
\begin{equation*}
M_{1}(Y_{T}^{u_{1}^{*,\lambda_{1}^{0}},u_{2}^{*,\lambda_{2}}},e_{i})=\ 0 =\ M_{1}(Y_{T}^{u_{1},u_{2}^{*}},e_{i}), \quad a.s. \quad \text{for} \quad i=1,2,\ldots,D.\\
\end{equation*}
By (\ref{p1}),
\begin{equation}
\label{p11}
J_{1}(y,e_{i},u_{1}^{*,\lambda_{1}^{0}},u_{2}^{*,\lambda_{2}})\geq J_{1}(y,e_{i},u_{1},u_{2}^{*}),
\end{equation}
for all $u_{1}\in \Theta_{1}$ and $e_{i}\in S, \ i=1,2,\ldots,D$.\\
Similarly, we can obtain 
\begin{equation}
\label{p12}
J_{2}(y,e_{i},u_{1}^{*,\lambda_{1}},u_{2}^{*,\lambda_{2}^{0}})\geq J_{2}(y,e_{i},u_{1}^{*},u_{2}),
\end{equation}
for all $u_{2}\in \Theta_{2}$ and $e_{i}\in S, \ i=1,2,\ldots,D$.\\
Hence, by the definition of Nash equilibrium, the inequalities (\ref{p11})-(\ref{p12}) complete the proof.
\end{proof}
We can easily develop a similar theorem for the constraint type (\ref{cons11}) as well:
\begin{theorem} 
\label{th4}
Suppose that for all $\lambda_{k} \in A_{k} \subseteq \mathbb{R}$, we can find $\phi_{k}^{\lambda_{k}}(y,e_{i})$, \ $i=1,2,\ldots, D$ \ and \ $k=1,2$, and a Nash equilibrium $(u_{1}^{*,\lambda_{1}},u_{2}^{*,\lambda_{2}})$ solving the unconstrained stochastic control problems (\ref{unconsnonzero}) for each player. Moreover, suppose that there exist $\lambda_{k}^{0}\in A_{k}\subseteq \mathbb{R}$, \ $k=1,2$, such that 
\begin{equation}
\label{th4cons1}
E[M_{1}(Y_{T}^{u_{1}^{*,\lambda_{1}^{0}},u_{2}^{*,\lambda_{2}}},e_{i})]= 0  \quad \text{and}\quad  E[M_{2}(Y_{T}^{u_{1}^{*,\lambda_{1}},u_{2}^{*,\lambda_{2}^{0}}},e_{i})]= 0,
\end{equation}
for all $e_{i}\in S, \ i=1,2,\ldots, D$.\\
Then, $\phi_{k}(y,e_{i})=\phi_{k}^{\lambda_{k}^{0}}(y,e_{i}), \ k=1,2, \ i=1,2, \ldots, D$ and $(u_{1}^{*},u_{2}^{*})=(u_{1}^{*,\lambda_{1}^{0}},u_{2}^{*,\lambda_{2}^{0}})$ solves the constrained stochastic control problem described above.
\end{theorem}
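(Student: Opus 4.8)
## Proof Proposal for Theorem \ref{th4}

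The plan is to mirror the argument of Theorem \ref{th3}, replacing the almost-sure constraint (\ref{cons22}) with the expectation constraint (\ref{cons11}) and exploiting the fact that in the performance functionals only the expectation $E^{y,e_i}[\,\cdot\,]$ of $M_k$ appears, so the weaker hypothesis (\ref{th4cons1}) suffices. First I would fix $i$ and write out the Nash-equilibrium inequality for Player 1 at the multiplier pair $(\lambda_1,\lambda_2)$: by definition of $(u_1^{*,\lambda_1},u_2^{*,\lambda_2})$ solving the unconstrained problems (\ref{unconsnonzero}),
\begin{align*}
J_1(y,e_i,u_1^{*,\lambda_1},u_2^{*,\lambda_2}) + \lambda_1 E^{y,e_i}[M_1(Y_T^{u_1^{*,\lambda_1},u_2^{*,\lambda_2}},\alpha_T)]
\ \geq\ J_1(y,e_i,u_1,u_2^{*,\lambda_2}) + \lambda_1 E^{y,e_i}[M_1(Y_T^{u_1,u_2^{*,\lambda_2}},\alpha_T)]
\end{align*}
for every $u_1\in\Theta_1$, where I am using the linearity of expectation to separate the penalty term from $J_1$.

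Next I would specialize to $\lambda_1=\lambda_1^{0}$ and restrict attention to controls $u_1$ that are feasible for the constrained problem, i.e. those satisfying (\ref{cons11}) with $k=1$; write $u_1^{*}:=u_1^{*,\lambda_1^{0}}$ and $u_2^{*}:=u_2^{*,\lambda_2^{0}}$. By hypothesis (\ref{th4cons1}) the penalty term on the left vanishes, $E^{y,e_i}[M_1(Y_T^{u_1^{*},u_2^{*,\lambda_2}},\alpha_T)]=0$, and by feasibility of $u_1$ the penalty term on the right also vanishes, $E^{y,e_i}[M_1(Y_T^{u_1,u_2^{*,\lambda_2}},\alpha_T)]=0$. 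Hence the inequality collapses to
\begin{equation*}
J_1(y,e_i,u_1^{*},u_2^{*,\lambda_2}) \ \geq\ J_1(y,e_i,u_1,u_2^{*,\lambda_2}) \qquad \text{for all feasible } u_1\in\Theta_1,
\end{equation*}
which is the analogue of (\ref{p11}). By the symmetric argument for Player 2 with $\lambda_2=\lambda_2^{0}$ I obtain $J_2(y,e_i,u_1^{*,\lambda_1},u_2^{*})\geq J_2(y,e_i,u_1^{*,\lambda_1},u_2)$ for all feasible $u_2$, the analogue of (\ref{p12}). Finally, identifying $u_1^{*,\lambda_1}$ with $u_1^{*}$ and $u_2^{*,\lambda_2}$ with $u_2^{*}$ when both multipliers are taken at their critical values $\lambda_k^{0}$, these two inequalities are exactly the defining conditions of a Nash equilibrium for the constrained game; and evaluating at the equilibrium shows $\phi_k(y,e_i)=J_k(y,e_i,u_1^{*},u_2^{*})=\phi_k^{\lambda_k^{0}}(y,e_i)$ since the penalty terms are zero there.

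I expect the only genuinely delicate point — beyond the bookkeeping of which control carries which superscript — to be the consistency of notation: one must be careful that the Nash pair $(u_1^{*,\lambda_1},u_2^{*,\lambda_2})$ that solves (\ref{unconsnonzero}) for the chosen multipliers is the \emph{same} pair whose components are $u_1^{*,\lambda_1^{0}}$ and $u_2^{*,\lambda_2^{0}}$, so that plugging $\lambda_1=\lambda_1^{0}$ and $\lambda_2=\lambda_2^{0}$ simultaneously is legitimate; this is an assumption implicitly built into the statement (exactly as in Theorem \ref{th3}) rather than something to be proved. Everything else is the substitution argument above, and no properties of the regime-switching jump-diffusion dynamics (\ref{eq:3.1})–(\ref{3.1.1}) are needed beyond well-posedness, which is already part of admissibility. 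This completes the plan.
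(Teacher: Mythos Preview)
Your proposal is correct and follows essentially the same route as the paper: the paper does not give a separate proof of Theorem~\ref{th4} but simply remarks that it is developed ``similarly'' to Theorem~\ref{th3}, and your argument is exactly that---mirror the Nash-inequality computation of Theorem~\ref{th3}, using the fact that for a real scalar $\lambda_k$ the penalty term factors as $\lambda_k\,E^{y,e_i}[M_k(\cdot)]$ so that the expectation constraint (\ref{cons11}) and hypothesis (\ref{th4cons1}) suffice to kill both sides. Your remark about the notational consistency of the multiplier superscripts is also apt, as the same implicit identification is made in the paper's proof of Theorem~\ref{th3}.
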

In this section, we extended Theorem 11.3.1 in \cite{oksendal2} to a nonzero-sum stochastic differential game formulation within the framework of regime-switches.
\begin{remark}
Firstly, we should indicate that Theorems \ref{th1}, \ref{th2}, \ref{th3} and \ref{th4} can be applied to both of the dynamic programing principle and stochastic maximum principle under specific technical conditions, which arise as a consequence of the nature of the corresponding technique. \\
In this work, we provide an application of nonzero-sum game formulation with stochastic maximum principle. The technical conditions and problem formulation within the framework of stochastic maximum principle for a Markov regime-switching jump diffusion system of such a game have already been developed in \cite{pamen} without a Lagrangian setting. Hence, here, we just remind an appropriate version of sufficient maximum principle in Appendix.  \\
On the other side, while applying Theorem \ref{suff} to a Lagrangian problem, one should be careful about that $g_{k}^{\lambda_{k}}(y,e_{i})=g_{k}(y,e_{i})+\lambda_{k}(M_{k}(y,e_{i}))$'s are concave, $C^{1}$-functions with respect to $y$ for $i=1,2,\ldots,D$ \ and \ $k=1,2$.
\end{remark}


\section{An Application: Bancassurance}
\label{apply}

In this section, we provide an application of Theorem \ref{th4} within the framework of a collaboration between a bank and an insurance company, which illustrates an example of a well-known concept: \textit{Bancassurance}. \\
The core of the joint venture of the bank and insurers is to strengthen their business objectives through sharing a client database, development of the products as well as the coordination. Insurance companies and banks utilize from this long-term cooperation in many senses. For example, insurance companies may create new and more efficient financial instruments with the help of the experience of the banks. Furthermore, they can reach the wide customer portfolio of the banks without investing in more offices and manpower. Hence, the insurance companies may reduce the costs while they increase the sales. On the other side, banks can also increase their income, diversify offered financial products, and by providing different services under one roof, they can gain more customer loyalty and satisfaction. Some financial and actuarial aspects for this legal and independent organizational entity, Bancassurance, can be found in \cite{banc1}, \cite{banc2}, \cite{banc3} and references therein.   \\
Basically, in our formulation, insurance company gives a certain amount of its surplus to the bank as a commission, which becomes the initial value of the wealth process of the bank. Moreover, in this application we can approach to our regime-switching model from two different sides:
\begin{enumerate}
\item The states of the Markov chain may represent switches in different states of the economy such as a shift from recession to growth periods, macroeconomic indicators, regulation changes or a radical change like a financial crisis. All these impulses affect the cash flows of both of the insurance company and the bank within the framework of decision making, dividend management, commissions, number of the clients etc. In this case, we may also assume to use a time-homogenous, irreducible Markov chain.\\
\item We can see the shifts as the states of a life insurance policy; i.e. the states of the Markov chain may represent injured, dead and alive cases. We may assume that the bank make an investment, whose cash flow is impacted by the abrupt changes experienced by the insured, like investing in the stocks of an insurance company. In this case, time-inhomogenous and reducible Markov chains may be utilized.
\end{enumerate}
First, let us introduce the dynamics of the wealth process of the insurance company. $\tilde{p}(t)$ represents deterministic premium rate at time $t\in [0,T]$ for each claim. An insurance premium is the amount of money an insured has to pay for an insurance policy, which may cover healthcare, auto, home, and life insurance.
The premium is an income for an insurance company, which is used for coverage of the claims against the policy. Additionally, the companies may utilize premiums to make some investments and increase their own wealth. In our application, we focus on a life insurance policy.\\
Now, we can show the dynamics of the surplus process by $R(t)$ as follows:
\begin{align*}
dR(t)=&\ a(t, \alpha(t-))dt+\sigma_{1}(t,\alpha(t-))dW_{1}(t)+\sum_{i=1,\ i\neq j}^{N}\gamma^{ij}(t)(dJ^{ij}(t)-\mu_{ij}(t))\\
=&\ a(t, \alpha(t-))dt+\sigma_{1}(t,\alpha(t-))dW_{1}(t)+\gamma(t,\alpha(t-))d\tilde{\Phi}(t)
\end{align*}
Here, for $t\in [0,T]$, \ $\sigma_{1}(t,\alpha(t-))$ denotes the instantaneous volatility of the aggregate insurance claims at time $t$ \ and \ $a(t, \alpha(t-))$  specifies the payments of the insurer due during sojournus in state $i$. Finally, $\gamma^{ij}(\cdot)$ determines the claims of the insurance company to the insured due upon transition from state $i$ to $j$.\\
In this context, $\alpha(\cdot)$ is a time-inhomogenous Markov chain and $\mu_{ij}(\cdot)$ indicates the intensity of the chain, which corresponds to the mortality rate for a life-insurance contract. \\
Furthermore, $J^{ij}(t)$ denotes the number of the transitions into state $j$ up to and including time $t\in [0,T]$ for the associated c\'adl\'ag counting process, for which we use a martingale form. By the way, we describe a risk exchange between insurer and insured. While the company pays out the amount of insurance claim $\gamma^{ij}(t)$ upon a transition to state $j$, policy holder has to pay an amount of $\gamma^{ij}(t)\mu_{ij}(t)$ if she is in state $i$ and $t\in [0,T]$.\\
In this application, we suppose that the insurance company pays an amount of its surplus to its shareholders, which is called dividend distribution, and it is the only control process of the pension fund defined as follows: 
\begin{equation*}
d\tilde{D}(t)=\delta(t)dt.
\end{equation*}
Hence, we represent the wealth (cash) process of the insurance company $X_{1}(t), \ t\in [0,T]$ as follows:
\begin{align}
\label{comp}
dX_{1}(t)=& \tilde{p}(t)dt-dR(t)-dD(t) \nonumber\\
X_{1}(0)=&u-c,
\end{align}
where $u$ and $c$ are nonnegative constant values corresponds to initial surplus of the insurance company and the commission of the bank paid at time $t=0$, respectively.\\
We focus on the cash flow of the bank, which is generated just by an investment of the gathered commissions via the bancassurance agreement rather than other investments of the bank.\\
Let us introduce the wealth (cash) process the bank:
\begin{align*}
dX_{2}(t)=&\ X_{2}(t-)\biggl\{u(t)dt+\sigma_{2}(t,\alpha(t-))dW_{2}(t)+\int_{\mathbb{R}_{0}}\eta(t,\alpha(t-),z)\tilde{N}(dt,dz)\biggr\}\\
X_{2}(0)=&\ c, \quad t\in [0,T]
\end{align*} 
where the appreciation rate is not a given priori. Specifically, $u(\cdot)$ is a control process depending on the interaction between the bank and the insurer.\\
Now, we can present the state process of the system as follows:
\begin{align}
&dY(t)= \nonumber
\begin{bmatrix}
dX_{1}(t)  \nonumber\\
dX_{2}(t)  \nonumber
\end{bmatrix}
\\ & \qquad =\nonumber
\begin{bmatrix}
\tilde{p}(t)-a(t,\alpha(t-))-\delta(t) \nonumber\\
X_{2}(t-)u(t) \nonumber
\end{bmatrix}
dt+\nonumber
\begin{bmatrix}
-\sigma_{1}(t,\alpha(t-)) & 0   \nonumber   \\
0 &  X_{2}(t-)\sigma_{2}(t,\alpha(t-))  \nonumber
\end{bmatrix}
\begin{bmatrix}
dW_{1}(t) \nonumber \\
dW_{2}(t)   \nonumber
\end{bmatrix}
\\& \qquad+    
\begin{bmatrix}
0      \\
X_{2}(t-)\int_{\mathbb{R}_{0}}\eta(t,\alpha(t-),z)    
\end{bmatrix}
\tilde{N}(dt,dz)+    
\begin{bmatrix}
-\gamma(t,\alpha(t-)) \\
0                   
\end{bmatrix}
d\tilde{\Phi}(t),     \label{system} \\
&\text{with initial values,}       \nonumber    \\
&Y(0)=                      \nonumber
\begin{bmatrix}
X_{1}(0)           \nonumber \\
X_{2}(0)\nonumber
\end{bmatrix}
=        \nonumber
\begin{bmatrix}
u-c      \nonumber      \\
c        \nonumber
\end{bmatrix}
>0. \nonumber
\end{align}
We assume that $W_{1}$ and $W_{2}$ are independent Brownian motions; moreover, $a, \ \sigma_{1}, \ \sigma_{2},\ \eta$, \ and \ $\gamma$ are square integrable and measurable functions. \\ 
Let us describe the performance functionals of the insurer and the bank by $J_{1}(\delta, u^{*})$ and $J_{2}(\delta^{*}, u)$ correspondingly:
\begin{equation*}
J_{1}(\delta, u^{*})=\ E^{x,e_{i}}\biggl[\int_{0}^{T}\frac{1}{1-\kappa_{1}}h_{1}(t,\alpha(t-))\delta(t)^{1-\kappa_{1}}dt-X^{2}_{2}(T)\biggr],
\end{equation*}                               
and
\begin{equation*}
J_{2}(\delta^{*},u)=\ E^{x,e_{i}}\biggl[\int_{0}^{T}h_{2}(t,\alpha(t-))\ln(u(t))dt+\kappa_{2}X_{1}(T)\biggr],
\end{equation*}
where $\kappa_{1}\geq 0, \ \kappa_{1}\neq 1$, \ $\kappa_{2}\in \mathbb{R}$, \ $h_{1}, \ h_{2}$ \ are square integrable, measurable functions and $\tilde{r}(t,e_{i})=(\tilde{r}_{1},\tilde{r}_{2},\ldots,\tilde{r}_{D})$, \ $i=1,2,\ldots,D$, are constants at each state on $[0,T]$ and can be seen as interest rates in different states of the economy.\\ 
Hence, our problem is to find $(\delta^{*},u^{*})$ by solving:
\begin{equation*}
J_{1}(\delta^{*}, u^{*})=\ \sup_{\delta\in \Theta_{1}}J_{1}(\delta, u^{*}) \\
\end{equation*}
subject to the system (\ref{system}) and
\begin{equation*}
E[X_{1}(T)]=K_{1},
\end{equation*}
and 
\begin{equation*}
J_{2}(\delta^{*}, u^{*})=\ \sup_{u\in \Theta_{2}}J_{1}(\delta^{*}, u) \\
\end{equation*}
subject to the system (\ref{system}) and
\begin{equation*}
E[e^{-\tilde{r}(T,\alpha(T))}\ln(X_{2}(T))]=K_{2}. 
\end{equation*}
Here, in this context, Player 1 wants to maximize $h_{1}(\cdot, e_{i})$ times power utility of her dividend processes while she punishes the deviation of terminal value of Player 2 from $0$ and sets a goal to reach a level of $K_{1}$ for the terminal value of her wealth process in the sense of expected values. On the other side, Player 2 purposes to maximize $h_{2}(\cdot, e_{i})$ times logarithmic utility of her appreciation rate with $\kappa_{1}$ times the terminal value of insurer's wealth process while she sets a target to catch $K_{2}$ level for the discounted logarithm of her terminal value in the sense of expected values.\\
Finally, if we consider this nonzero-sum game problem in terms of the Lagrangian formulation by Theorem \ref{th4}, our problem becomes to find $(\delta^{*},u^{*})$ for:
\begin{equation*}
J_{1}(\delta^{*}, u^{*})=\sup_{\delta\in \Theta_{1}} E^{x,e_{i}}\biggl[\int_{0}^{T}\frac{1}{1-\kappa_{1}}h_{1}(t,\alpha(t-))\delta(t)^{1-\kappa_{1}}dt-X^{2}_{2}(T)+\lambda_{1}(X_{1}(T)-K_{1})\biggr],
\end{equation*}
and
\begin{equation*}
J_{2}(\delta^{*},u^{*})=\sup_{u\in \Theta_{2}} E^{x,e_{i}}\biggl[\int_{0}^{T}h_{2}(t,\alpha(t-))\ln u(t)dt+\kappa_{2}X_{1}(T)+\lambda_{2}(e^{-\tilde{r}(T,\alpha(T))}\ln X_{2}(T)-K_{2})\biggr].
\end{equation*}
Now, we can provide the corresponding Hamiltonian functions for each player and solve them by Theorem \ref{suff} (see Apendix):
\begin{align*}
&H^{1}(t,y,\delta,u, p^{2},q^{2},r^{2}(\cdot),w^{2},e_{i})=\ \frac{1}{1-\kappa_{1}}\delta^{1-\kappa_{1}}h_{1}(t,\alpha(t-))+(\tilde{p}-a(t,e_{i}))-\delta)p_{1}^{1}+x_{2}up^{1}_{2} \nonumber\\
&\quad-\sigma_{1}(t,e_{i})q^{1}_{11}+x_{2}\sigma_{2}(t,e_{i})q^{1}_{22}+x_{2}\int_{\mathbb{R}_{0}}\eta(t-,e_{i},z)r^{1}_{2}(t-,z)\nu(dz)-\sum_{j=1}^{N}\gamma^{ij}(t)w_{1}^{1,j}\mu_{ij}(t).
\end{align*}
and 
\begin{align*}
&H^{2}(t,y,\delta,u, p^{2},q^{2},r^{2}(\cdot),w^{2},e_{i})=\ h_{2}(t,e_{i})\ln(u)+(\tilde{p}-a(t,e_{i}))-\delta)p_{1}^{2}+x_{2}up^{2}_{2} \nonumber\\
&\quad-\sigma_{1}(t,e_{i})q^{2}_{11}+x_{2}\sigma_{2}(t,e_{i})q^{2}_{22}+x_{2}\int_{\mathbb{R}_{0}}\eta(t-,e_{i},z)r^{2}_{2}(t-,z)\nu(dz)+\sum_{j=1}^{D}-\gamma^{ij}(t)w_{1}^{2,j}\mu_{ij}(t).
\end{align*}
We purpose to solve the corresponding BSDEs with jumps and regimes and find the followings:
\begin{align*}
&p^{k}(t)=
\begin{bmatrix}
p^{k}_{1}(t)\\
p^{k}_{2}(t)
\end{bmatrix}
\quad q^{k}(t)=
\begin{bmatrix}
q^{k}_{11}(t) & q^{k}_{12}(t)\\
q^{k}_{21}(t) & q^{k}_{22}(t)
\end{bmatrix}
\quad r^{k}(t-,z)=
\begin{bmatrix}
r^{k}_{1}(t-,z)\\
r^{k}_{2}(t-,z)
\end{bmatrix}
\\
\\ &w(t)=
\begin{bmatrix}
w^{k}_{1}(t)\\
w^{k}_{2}(t)
\end{bmatrix}
=
\begin{bmatrix}
w^{k,j}_{1}(t)\\
w^{k,j}_{2}(t)
\end{bmatrix}
\qquad t\in [0,T], \quad k=1,2 \quad \text{and} \quad j=1,2,\ldots,D.
\end{align*}
Firstly, let us solve the adjoint equations corresponding to $H_{1}$:
\begin{align}
\label{p-1-1}
dp_{1}^{1}(t)=&\ q^{1}_{11}(t)dW_{1}(t)+q^{1}_{12}(t)dW_{2}(t)+\int_{\mathbb{R}_{0}}r^{1}_{1}(t-,z)\tilde{N}(dt,dz)+w_{1}^{1}(t)d\tilde{\Phi}(t),\nonumber\\
p_{1}^{1}(T)=&\ \lambda_{1},
\end{align}
and
\begin{align}
\label{p-1-2}
dp_{2}^{1}(t)=&\ -\biggl(u(t)p^{1}_{2}(t)+\sigma_{2}(t,\alpha(t-))q^{1}_{22}(t)+\int_{\mathbb{R}_{0}}\eta(t-,e_{i},z)r^{1}_{2}(t-,z)\nu(dz)\biggr)dt \nonumber\\
&+q^{1}_{21}(t)dW_{1}(t)+q^{1}_{22}(t)dW_{2}(t)+\int_{\mathbb{R}_{0}}r^{1}_{2}(t-,z)\tilde{N}(dt,dz)+w_{2}^{1}(t)d\tilde{\Phi}(t), \nonumber\\
p_{2}^{1}(T)=&\ -2X_{2}(T),
\end{align}
where $w_{k}^{1}(t)d\tilde{\Phi}(t)=\sum_{j=1}^{N}w_{k}^{1,j}(t)d\tilde{\Phi}_{j}(t)$ for $k=1,2$ and $j=1,2,\dots, D$. \\
\\
Let us solve the Equations (\ref{p-1-1})-(\ref{p-1-2}).\\
Firstly, in order to find a solution for \ $p_{2}^{1}(t), \ t\in [0;T]$ \ let us try:
\begin{align*}
p_{2}^{1}(t)=&\ \phi(t,\alpha(t))X_{2}(t)\\
\phi(T,\alpha(T))=&\ \phi(T,e_{i})=-2, 
\end{align*}
where $\phi(\cdot, e_{i})$ is a $C^{1}$ deterministic function for all \ $e_{i}\in S$, \ $i=1,2,\ldots,D$ with the given terminal value.\\
We apply It\^{o}'s formula as described in \cite{zes:02}:
\begin{align}
\label{ito1}
dp_{2}^{1}(t)=&\ \biggl(\phi^{'}(t,\alpha(t-))X_{2}(t-)+\phi(t,\alpha(t-))X_{2}(t-)u(t) \nonumber \\
&+\sum_{j=1}^{N}X_{2}(t-)(\phi(t,e_{j})-\phi(t,\alpha(t-)))\mu_{j}(t)\biggr)dt \nonumber\\
&+\phi(t,\alpha(t-))X_{2}(t-)\sigma_{2}(t,\alpha(t-))dW_{2}(t) \nonumber\\
&+\int_{\mathbb{R}_{0}}\phi(t,\alpha(t-))X_{2}(t-)\eta(t-,\alpha(t-),z)\tilde{N}(dt,dz)\nonumber\\
&+\sum_{j=1}^{D}X_{2}(t-)(\phi(t,e_{j})-\phi(t,\alpha(t-)))d\tilde{\Phi}_{j}(t)
\end{align}
Now, we compare Equations (\ref{p-1-2})-(\ref{ito1}) and obtain the following solutions for $p_{2}^{1}(t), \ q^{1}_{21}(t), \ q^{1}_{22}(t), \ r_{2}^{1}(t-,z)$ \ and \ $w_{2}^{1}(t)$ \ for $t\in [0,T]$:
\begin{align*}
&q^{1}_{21}(t)=0,\\
&q^{1}_{22}(t)=\phi(t,e_{i})X_{2}(t-)\sigma_{2}(t,e_{i}),\\
&r_{2}^{1}(t-,z)=\phi(t,e_{i})X_{2}(t-)\eta(t-,\alpha(t-),z), \\
&w_{2}^{1,j}(t)=(\phi(t,e_{j})-\phi(t,e_{i}))X_{2}(t-),
\end{align*}
and
\begin{align*}
&-\phi(t,e_{i})X_{2}(t-)\biggl(u(t)+\sigma_{2}^{2}(t,e_{i})+\int_{\mathbb{R}_{0}}\eta^{2}(t-,e_{i},z)\nu(dz)\biggr) \\
&=X_{2}(t-)\biggl(\phi^{'}(t,e_{i})+\phi(t,e_{i})u(t)+\sum_{j=1}^{N}X_{2}(t-)(\phi(t,e_{j})-\phi(t,e_{i}))\mu_{ij}(t)\biggr).
\end{align*}
Hence,
\begin{align*}
&X_{2}(t-)\biggl[\phi^{'}(t,e_{i})+\phi(t,e_{i})\biggl(2u^{*}(t)+\sigma_{2}^{2}(t,e_{i})+\int_{\mathbb{R}_{0}}\eta^{2}(t-,e_{i},z)\nu(dz)\biggr)\\
&\quad +\sum_{j=1}^{D}(\phi(t,e_{j})-\phi(t,e_{i}))\mu_{ij}(t)\biggr]=0.
\end{align*}
Let us call
\begin{equation*}
B(t,e_{i})=2u^{*}(t)+\sigma_{2}^{2}(t,e_{i})+\int_{\mathbb{R}_{0}}\eta^{2}(t-,e_{i},z)\nu(dz).
\end{equation*}
Then, obviously, we get the following $N$-coupled differential equation with its terminal value as follows:
\begin{align*}
&\phi^{'}(t,e_{i})+\phi(t,e_{i})B(t,e_{i})+\sum_{j=1}^{D}(\phi(t,e_{j})-\phi(t,e_{i}))\mu_{ij}(t)=0, \\
&\Phi(T,e_{i})=-2, \quad \text{for} \quad i=1,2,\ldots,D.
\end{align*}
Finally, by applying Feyman-Kac procedure:
\begin{equation*}
\phi(t,e_{i})=\ -2E\biggl[\exp\biggl\{\int_{t}^{T}-B(t,e_{i})ds\biggr\}| \alpha(t-)=e_{i}\biggr], \ i=1,2,\ldots,D.
\end{equation*}
Moreover, we can find out $p_{1}^{1}(t), \ t\in [0,T]$ by trying $p_{1}^{1}(t)=g_{1}(t)$, where  $g(\cdot)$ is a deterministic function with terminal value $g(T)=\lambda_{1}$.\\
Then, by Equation (\ref{p-1-1}):  
\begin{equation*}
p_{1}^{1}(t)=\lambda_{1}, \quad q_{11}^{1}(t)=q_{12}^{1}(t)=r^{1}_{1}(t-,z)=w_{1}^{1}(t)=0.
\end{equation*}
Now, let us differentiate $H^{1}$ with respect to $\delta$ to define the optimal control process for the insurance company:
\begin{equation*}
\delta^{-\kappa_{1}}(t)h_{1}(t,\alpha(t-))-p^{1}_{1}=0
\end{equation*}
Then, 
\begin{equation*}
\delta(t)=\biggl(\frac{\lambda_{1}}{h_{1}(t,\alpha(t-))}\biggr)^{\frac{-1}{\kappa_{1}}}.
\end{equation*}
Finally, by applying expectation to both sides of the Equation (\ref{comp}) and by the constraint for Player 1, let us determine $\lambda_{1}$:
\begin{equation*}
\lambda_{1}=\ \biggl\{u-c-K_{1}+E\bigg[\int_{0}^{T}(\tilde{p}(t)-a(t,\alpha(t-)))dt\biggr]\biggr\}^{-\kappa_{1}}E\biggl[\int_{0}^{T}h_{1}^{\frac{1}{\kappa_{1}}}(t,\alpha(t-))dt\biggr]^{\kappa_{1}}
\end{equation*}
Now, let us represent the adjoint equations for Hamiltonian of the second player:
\begin{align}
\label{p-2-1}
dp_{1}^{2}(t)=&\ q^{2}_{11}(t)dW_{1}(t)+q^{2}_{12}(t)dW_{2}(t)+\int_{\mathbb{R}_{0}}r^{2}_{1}(t-,z)\tilde{N}(dt,dz)+w_{1}^{2}(t)d\tilde{\Phi}(t), \nonumber\\
p_{1}^{2}(T)=&\ \kappa_{2},
\end{align}
and
\begin{align}
\label{p-2-2}
dp_{2}^{2}(t)=&\ -\biggl(u(t)p^{2}_{2}(t)+\sigma_{2}(t,\alpha(t-))q^{2}_{22}(t)+\int_{\mathbb{R}_{0}}\eta(t-,e_{i},z)r^{2}_{2}(t-,z)\nu(dz)\biggr)dt\nonumber\\
&+q^{2}_{21}(t)dW_{1}(t)+q^{2}_{22}(t)dW_{2}(t)+\int_{\mathbb{R}_{0}}r^{2}_{2}(t-,z)\tilde{N}(dt,dz)+w_{2}^{2}(t)d\tilde{\Phi}(t),\nonumber\\
p_{2}^{2}(T)=&\ \frac{\lambda_{2}e^{-\tilde{r}(T,\alpha(T)}}{X_{2}(T)}, 
\end{align}
where $w_{k}^{2}(t)d\tilde{\Phi}(t)=\sum_{j=1}^{D}w_{k}^{2,j}(t)d\tilde{\Phi}_{j}(t)$ for $k=1,2$ and $j=1,2,\dots, D$.\\
\\
Now, let us solve these adjoint equations.\\
Let us try:
\begin{align*} 
&p_{2}^{2}(t)=\frac{A(t,\alpha(t))}{X_{2}(t)}, \qquad  \text{for} \quad t\in [0,T],\\
&A(T,\alpha(T))=A(T,e_{k})=\lambda_{2}e^{-\tilde{r}(T,e_{k})}, 
\end{align*}
where $A(\cdot,e_{k})$ is a deterministic $C^{1}$ function for all $k=1,2,\ldots,D$ with the given terminal value.\\
We apply It\^{o}'s formula as described in \cite{zes:02}:
\begin{align}
\label{ito2}
dp_{2}^{2}(t)=&\ \biggl[A^{'}(t,\alpha(t-))X_{2}^{-1}(t)+A(t,\alpha(t-))X_{2}^{-1}(t)\biggl(-u(t)+\sigma^{2}_{2}(t,\alpha(t-)) \nonumber\\
&+\int_{\mathbb{R}_{0}}\left\{(1+\eta(t-,\alpha(t-),z))^{-1}-1+\eta(t-,\alpha(t-),z)\right\}\nu(dz)\biggr)\nonumber\\
&+ \sum_{j=1}^{N}X_{2}^{-1}(t)\left\{A(t,e_{j})-A(t,\alpha(t-))\right\}\mu_{j}(t)\biggr]dt\nonumber\\
&+A(t,\alpha(t-))X_{2}^{-1}(t)\biggl[-\sigma_{2}(t,\alpha(t-))dW_{2}(t) \nonumber \\
& +\int_{\mathbb{R}_{0}}\left\{(1+\eta(t-,\alpha(t-),z))^{-1}-1\right\}\tilde{N}(dt,dz)\biggr]\nonumber\\
&+\sum_{j=1}^{D}X_{2}^{-1}(t)\left\{A(t,e_{j})-A(t,\alpha(t-))\right\}d\tilde{\Phi}_{j}(t)
\end{align}
Now, we compare the Equations  (\ref{p-2-2}) and (\ref{ito2}), we obtain:
\begin{align}
\label{replace}
&-\biggl(u(t)p_{2}^{2}(t)+\sigma_{2}(t,\alpha(t-))q^{2}_{22}(t)+\int_{\mathbb{R}_{0}}\eta(t-,\alpha(t-),z))r^{2}_{2}(t-,z)\nu(dz)\biggr)\nonumber \\
&=A^{'}(t,\alpha(t-))X_{2}^{-1}(t)+A(t,\alpha(t-))X_{2}^{-1}(t)\biggl(-u(t)+\sigma^{2}_{2}(t,\alpha(t-)) \nonumber\\
&+\int_{\mathbb{R}_{0}}\left\{(1+\eta(t-,\alpha(t-),z))^{-1}-1+\eta(t-,\alpha(t-),z)\right\}\nu(dz)\biggr)\nonumber\\
&+ \sum_{j=1}^{D}X_{2}^{-1}(t)\left\{A(t,e_{j})-A(t,\alpha(t-))\right\}\mu_{j}(t)
\end{align}
and 
\begin{align*}
&q^{2}_{21}(t)=0, \\
&q^{2}_{22}(t)=-A(t,e_{i})X_{2}^{-1}\sigma_{2}(t,e_{i}), \\
&r^{2}_{2}(t)=A(t,e_{i})X_{2}^{-1}(t)\biggl((1+\eta(t-,e_{i},z))^{-1}-1\biggr), \\
&w_{2}^{2,j}(t)=X_{2}^{-1}\left\{A(t,e_{j})-A(t,e_{i})\right\}, \ \text{for} \ i=1,2,\ldots,D.
\end{align*}
If we replace the values of $p_{2}^{2},\ q^{2}_{21}$, and $r^{2}_{2}$ values in Equation (\ref{replace}), the we get:
\begin{align*}
&A^{'}(t,e_{i})X_{2}^{-1}(t)+\int_{\mathbb{R}_{0}}A(t,e_{i})X_{2}^{-1}(t)\biggl(\frac{\eta(t-,e_{i},z)}{\eta(t-,e_{i},z)+1}+\frac{1}{\eta(t-,e_{i},z)+1}-1\biggl)\nu(dz)\\
&\quad +\sum_{j=1}^{D}X_{2}^{-1}(t)\left\{A(t,e_{j})-A(t,e_{i})\right\}\mu_{ij}(t)=0
\end{align*}
Hence, finally, we have:
\begin{equation*}
X_{2}^{-1}\biggl[A^{'}(t,e_{i})+\sum_{j=1}^{D}\left\{A(t,e_{j})-A(t,e_{i})\right\}\mu_{ij}(t)\biggr]=0.
\end{equation*}
Then, 
\begin{align*}
&A^{'}(t,e_{i})+\sum_{j=1}^{D}\left\{A(t,e_{j})-A(t,e_{i})\right\}\mu_{ij}(t)=0,\\
&A(T,\alpha(T))=A(T,e_{k})=\lambda_{2}e^{-\tilde{r}(T,e_{k})}
\end{align*}
for any $e_{k}\in S, \ k=1,2,\ldots,D$.\\
By applying classical Feyman-Kac procedure, we can solve these $N$-coupled equations:
\begin{align*}
A(t,\alpha(t))=&\ \lambda_{2}E\biggl[e^{-\tilde{r}(T,e_{k})}| \alpha(t-)=e_{i}\biggr]\\
=&\lambda_{2}e^{-\tilde{r}(T,\alpha(T))}, \ \text{for any} \ t\in [0,T].
\end{align*}
Now, let us find $p_{1}^{2}(t), \ t\in [0,T]$ by trying $p_{1}^{2}(t)=h(t)$, where  $h(t)$ is a deterministic function with terminal value $h(T)=\kappa_{2}$.\\
Then, by Equation (\ref{p-2-1}):  
\begin{equation*}
p_{1}^{2}(t)=\kappa_{2}, \quad q_{11}^{2}(t)=q_{12}^{2}(t)=r^{2}_{1}(t-,z)=w_{1}^{2}(t)=0.
\end{equation*}
Let us differentiate $H^{2}$ with respect to $u$ to define the optimal control process for the bank:
\begin{equation*}
\frac{h_{2}(t,\alpha(t-))}{u(t)}+X_{2}(t)p_{2}^{2}(t)=0
\end{equation*}
Then,
\begin{equation*}
u(t)=\frac{-1}{\lambda_{2}}e^{\tilde{r}(T,\alpha(T))}h_{2}(t,\alpha(t-)), \ t\in [0,T].
\end{equation*}
In order to determine $\lambda_{2}$, let us apply It\^{o}'s formula to $Y(t)=\ln(X_{2}(t))$:
\begin{align*}
dY(t)&=\ \biggl\{\frac{-1}{\lambda_{2}}e^{\tilde{r}(T,\alpha(T))}h_{2}(t,\alpha(t-))-\frac{1}{2}\sigma_{2}^{2}(t,\alpha(t-))\\
&+\int_{\mathbb{R}_{0}}\biggl(\ln(\eta(t-,\alpha(t-),z)+1)-\eta(t-,\alpha(t-),z)\biggr)\nu(dz)\biggr\}dt\\
&+\sigma_{2}(t,\alpha(t-))dW_{2}(t)+\int_{\mathbb{R}_{0}}\ln(\eta(t-,\alpha(t-),z)+1)\tilde{N}(dt,dz).
\end{align*}
If we multiply both sides of the equation by $e^{-\tilde{r}(T,\alpha(T))}$ and apply expectation, we get:
\begin{align*}
&E[e^{-\tilde{r}(T,\alpha(T))}\ln(X_{2}(T))]=E\biggl[e^{-\tilde{r}(T,\alpha(T))}\ln(c)+\int_{0}^{T}\biggl\{ \frac{-1}{\lambda_{2}}h_{2}(t,\alpha(t-))-\frac{1}{2}e^{-\tilde{r}(T,\alpha(T))}\\
&\times \sigma_{2}^{2}(t,\alpha(t-))+\int_{\mathbb{R}_{0}}e^{-\tilde{r}(T,\alpha(T))}\biggl(\ln(\eta(t-,\alpha(t-),z)+1)-\eta(t-,\alpha(t-),z)\biggr)\nu(dz)\biggr\}dt\biggr].
\end{align*}
Finally, let us call:
\begin{align*}
D_{1}&=\ e^{-\tilde{r}(T,\alpha(T))}\ln(c),\\
D_{2}&=\ E\biggl[\int_{0}^{T}h_{2}(t,\alpha(t-))dt\biggr],\\
D_{3}&=\ E\biggl[\int_{0}^{T}e^{-\tilde{r}(T,\alpha(T))}\biggl(-\frac{1}{2}\sigma_{2}^{2}(t,\alpha(t-))\\
&\quad +\int_{\mathbb{R}_{0}}\biggl\{\ln(\eta(t-,\alpha(t-),z)+1)-\eta(t-,\alpha(t-),z)\biggr\}\nu(dz)\biggr)dt\biggr].
\end{align*}
Therefore, by the constraint for Player 2, we select $\lambda_{2}$ such that:
\begin{equation*}
\lambda_{2}=\frac{D_{2}}{D_{1}+D_{3}-K_{2}}>0.
\end{equation*}
Finally, by measurability and square-integrability conditions for $\sigma_{k}, \ \eta, \ \gamma, \ h_{k}$ \ and selection of \ $g_{k}^{\lambda_{k}}(y,e_{i})=g_{k}(y,e_{i})+\lambda_{k}(M_{k}(y,e_{i}))$, \ for \ $i=1,2,\ldots,D$ \ and \ $k=1,2$, \ one can easily verify the integrability and concavity conditions of Theorem \ref{suff}.


\section{Conclusion}
\label{conc}

In this work, we developed techniques to solve stochastic optimal control problems in a Lagrangian game theoretical environment. Both of the zero-sum and nonzero-sum stochastic differential game problems with two specific type of constraints can be approached by dynamic programing principle and stochastic maximum principle within the construction of our theorems. Moreover, we demonstrated these theorems for a quite extended model of stochastic processes, named Markov regime-swithcing jump-diffusions. As we explained in Section \ref{sec:intro}, such models have a wide range of application area. In our work, we focused on a business agreement, called Bancassurance, between a bank and an insurance company by the methods of stochastic maximum principle for a nonzero-sum stochastic differential game. We investigated optimal dividend strategy for the company as a best response according to the optimal mean rate of return choice of a bank for its own cash flow and vice versa. We found out a Nash equilibrium for this game and solved the adjoint equations explicitly for each state. \\
It is well known that the timing and the amount of dividend payments are strategic decisions for companies. The announcement of a dividend payment may reduce or increase the stock prices of a company. A high dividend payment may give a message to shareholders and potential investors about substantial amount of profits achieved by the company. On the other side, it may create an impression of that the company does not have a good future project to invest in rather than paying to investors. Moreover, dividend payments may aim to honor the shareholders feeling of getting a reward for their trust in the company.\\
From the side of the bank, it is clear that creating a cash flow with high returns would be the main goal. It is obviously seen that depending on the values of $h_{2}(\cdot,e_{i}), \ e_{i}\in S, \ i=1,2,\ldots,D$, \ the appreciation rate of the bank's investment may drop below zero.  \\
Hence, in our formulation, we provide an insight to both of the bank and the insurance company about their best moves in a bancassurance commitment under specified technical conditions. 

\section*{Disclosure statement}

No conflict of interest.

\section*{Funding}

This project is supported by SCROLLER:A Stochastic ContROL approach to Machine Learning with applications to Environmental Risk models, Project 299897 from the Norwegian Research Council.


\bibliographystyle{tfnlm}
\bibliography{biblio}

\section*{Appendix}

Let us clarify the general formulation of the technique that we apply here for the solution of a nonzero-sum stochastic differential game within this context of Equations (\ref{eq:3.1})-(\ref{3.1.1}) and the problems (\ref{j1})-(\ref{j2}) by stochastic maximum principle for a Markov regime-switching jump-diffusion model:\\
The Hamiltonian functions associated with Player $k$, namely $H_{k}$, for $k=1,2$, defined from $[0,T]\times \mathbb{R}^{N}\times U_{1}\times U_{2}\times \mathbb{R}^{N}\times \mathbb{R}^{N\times M}\times \mathcal{R}\times \mathbb{R}^{N\times D} \times S\times$ to  $\mathbb{R}$ as follows:
\begin{align*}
&H^{k}(t,y,u_{1},u_{2},p^{k},q^{k},r^{k}(\cdot),w^{k},e_{i})=\ f_{k}(t,y,u_{1},u_{2},e_{i})+b^{T}(t,y,u_{1},u_{2},e_{i})p^{k}\\
&\qquad+tr(\sigma^{T}(t,y,u_{1},u_{2},e_{i})q^{k})+\int_{\mathbb{R}_{N}}\sum_{l=1}^{L}\sum_{n=1}^{N}\eta_{nl}(t,y,u_{1},u_{2},e_{i},z)r_{nl}^{k}(t,z)\nu_{l}(dz)\\
&\qquad+\sum_{j=1}^{D}\sum_{n=1}^{N}\gamma_{nj}(t,y,u_{1},u_{2},e_{i},z)w_{nj}^{k}(t)\mu_{ij}, \qquad k=1,2,
\end{align*}
and each $H^{k}, \ k=1,2$, is continuously differentiable with respect to $y$; i.e., each is a $C^{1}$-function with respect to $y$, and differentiable with respect to corresponding Player's control processes.\\
Corresponding adjoint equations for Player $k$, \ for $k=1,2$, \ in the unknown adapted processes $p^{k}(t)\in \mathbb{R}^{N}$, \ $q^{k}(t)\in \mathbb{R}^{N\times M}$, \ $r^{k}(t-,z)\in \mathcal{R}$, where $\mathcal{R}$ is the set of functions $r:[0,T]\times \mathbb{R}_{0}\rightarrow \mathbb{R}^{N\times L}$, and $w^{k}(t)\in \mathbb{R}^{N\times D}$ are given by the following equations:
\begin{align}
dp^{k}(t)=&\ -\nabla_{y}H_{k}(t,Y(t),u_{1}(t),u_{2}(t),p^{k}(t),q^{k}(t),r^{k}(t,\cdot),w^{k}(t),\alpha(t))dt \nonumber\\
&\quad +q^{k}(t)dW(t)+\int_{\mathbb{R}_{0}}r^{k}(t-,z)\tilde{N}(dt,dz), \qquad t<T,  \label{adj1}\\
p^{k}(T)=&\ \nabla g_{k}(Y(T),\alpha(T)), \qquad k=1,2, \label{adj2}
\end{align}
where $\nabla_{y} \phi(\cdot)=(\frac{\partial\phi}{\partial y_{1}},\ldots,\frac{\partial\phi}{\partial y_{N}})^{T}$ is the gradient of $\phi:\mathbb{R}^{N}\rightarrow \mathbb{R}$ with respect to $y=(y_{1},\ldots,y_{N})$.
For the existence–uniqueness results of the BSDEs with jumps and regimes (\ref{adj1})-(\ref{adj2}), see Propositions 5.1 and 5.2 by Crépey and Matoussi \cite{crepey}. In this context, here, we assume that $p^{k}(t),\ q^{k}(t), \ r^{k}(t-,z)$, \ and \ $w^{k}(t), \ k=1,2$ are square-integrable.\\
Now, we can present a sufficient maximum principle for such a game:
\begin{theorem}
\label{suff}
Let $(u_{1}^{*},u_{2}^{*})\in \Theta_{1}\times \Theta_{2}$ with a corresponding solution $\hat{Y}(t):=Y^{u_{1}^{*},u_{2}^{*}}(t)$ and suppose there exists an adapted solution  $(p^{k}(t),q^{k}(t),r^{k}(t-,z),w^{k}(t)), \ k=1,2$, of the corresponding adjoint equations (\ref{adj1})-(\ref{adj2}) such that for all $(u_{1},u_{2})\in \Theta_{1}\times \Theta_{2}$, we have:
\begin{align*}
&E\biggl[\int_{0}^{T}(\hat{Y}(t)-Y^{u_{1}}(t))^{T}\biggl\{ \hat{q}^{1}(t)\hat{q}^{1}(t)^{T}+\int_{\mathbb{R}_{0}}\hat{r}^{1}(t-,z)\hat{r}^{1}(t-,z)^{T}\nu(dz)\\
&\qquad \qquad+\hat{w}^{1}(t)Diag(\mu(t))\hat{w}^{1}(t)^{T}\biggr\}(\hat{Y}(t)-Y^{u_{1}}(t))^{T}dt\biggr]< \infty,
\end{align*}
and
\begin{align*}
&E\biggl[\int_{0}^{T}(\hat{Y}(t)-Y^{u_{2}}(t))^{T}\biggl\{ \hat{q}^{2}(t)\hat{q}^{2}(t)^{T}+\int_{\mathbb{R}_{0}}\hat{r}^{2}(t-,z)\hat{r}^{2}(t-,z)^{T}\nu(dz)\\
&\qquad \qquad+\hat{w}^{2}(t)Diag(\mu(t))\hat{w}^{2}(t)^{T}\biggr\}(\hat{Y}(t)-Y^{u_{2}}(t))^{T}dt\biggr]< \infty,
\end{align*}
where $Y^{u_{1}}(t):=Y^{u_{1},u^{*}_{2}}(t)$ and $Y^{u_{2}}(t):=Y^{u^{*}_{1},u_{2}}(t)$.\\
Furthermore,
\begin{align*}
&E\biggl[\int_{0}^{T}\hat{p}^{1}(t)^{T}\biggl((\sigma(t,Y^{u_{1}}(t),\alpha(t),u_{1}(t),u_{2}^{*}(t))-\hat{\sigma}(t,\hat{Y}(t),\alpha(t),u_{1}^{*}(t),u^{*}_{2}(t)))^{2}\\
&\quad+\int_{\mathbb{R}_{0}}(\eta(t,Y^{u_{1}}(t),\alpha(t),u_{1}(t),u_{2}^{*}(t),z)-\hat{\eta}(t,\hat{Y}(t),\alpha(t),u_{1}^{*}(t),u^{*}_{2}(t),z))^{2}\nu(dz) \\ 
&\quad+\sum_{j=1}^{D}(\gamma^{j}(t,Y^{u_{1}}(t),\alpha(t),u_{1}(t),u_{2}^{*}(t))-\hat{\gamma}^{j}(t,\hat{Y}(t),\alpha(t),u_{1}^{*}(t),u^{*}_{2}(t)))^{2}\lambda_{j}(t)\biggr)\hat{p}^{1}(t) dt\biggr]<\infty
\end{align*}
and
\begin{align*}
&E\biggl[\int_{0}^{T}\hat{p}^{2}(t)^{T}\biggl((\sigma(t,Y^{u_{2}}(t),\alpha(t),u_{1}^{*}(t),u_{2}(t))-\hat{\sigma}(t,\hat{Y}(t),\alpha(t),u_{1}^{*}(t),u_{2}^{*}(t)))^{2}\\
&\quad+\int_{\mathbb{R}_{0}}(\eta(t,Y^{u_{2}}(t),\alpha(t),u_{1}^{*}(t),u_{2}(t),z)-\hat{\eta}(t,\hat{Y}(t),\alpha(t),u_{1}^{*}(t),u_{2}^{*}(t),z))^{2}\nu(dz) \\ 
&\quad+\sum_{j=1}^{D}(\gamma^{j}(t,Y^{u_{2}}(t),\alpha(t),u_{1}^{*}(t),u_{2}(t))-\hat{\gamma}^{j}(t,\hat{Y}(t),\alpha(t),u_{1}^{*}(t),u^{*}_{2}(t)))^{2}\lambda_{j}(t)\biggr)\hat{p}^{2}(t) dt\biggr]<\infty.
\end{align*} 
Moreover, assume that the following conditions hold:
\begin{enumerate}
\item For almost all $t\in [0,T]$,
\begin{align*}
&H^{1}(t,\hat{Y}(t-),u^{*}_{1}(t),u^{*}_{2}(t),\hat{p}^{1}(t),\hat{q}^{1}(t),\hat{r}^{1}(t,\cdot),\hat{w}^{1}(t),\alpha(t-))\\
&\quad=\sup_{u_{1}\in U_{1}}H^{1}(t,\hat{Y}(t-),u_{1}(t),u^{*}_{2}(t),\hat{p}^{1}(t),\hat{q}^{1}(t),\hat{r}^{1}(t,\cdot),\hat{w}^{1}(t),\alpha(t-)),
\end{align*}
and 
\begin{align*}
&H^{2}(t,\hat{Y}(t-),u^{*}_{1}(t),u^{*}_{2}(t),\hat{p}^{2}(t),\hat{q}^{2}(t),\hat{r}^{2}(t,\cdot),\hat{w}^{2}(t),\alpha(t-))\\
&\quad=\sup_{u_{2}\in U_{2}}H^{2}(t,\hat{Y}(t-),u_{1}^{*}(t),u_{2}(t),\hat{p}^{2}(t),\hat{q}^{2}(t),\hat{r}^{2}(t,\cdot),\hat{w}^{2}(t),\alpha(t-)).
\end{align*}
\item For each fixed pair of $(t,e_{i})\in [0,T]\times S$,
\begin{equation*}
\hat{H^{1}}(y)=\sup_{u_{1}\in U_{1}}H^{1}(t,y,u_{1},u_{2}^{*}(t),\hat{p}^{1}(t),\hat{q}^{1}(t),\hat{r}^{1}(t,\cdot),\hat{w}^{1}(t),e_{i}),
\end{equation*}
and 
\begin{equation*}
\hat{H^{2}}(y)=\sup_{u_{2}\in U_{2}}H^{2}(t,y,u_{1}^{*}(t),u_{2},\hat{p}^{2}(t),\hat{q}^{2}(t),\hat{r}^{2}(t,\cdot),\hat{w}^{2}(t),e_{i})
\end{equation*}
exist and are concave functions of $y$.      
\item $g_{k}(y,e_{i}), \ k=1,2$,  are concave functions of $y$ for each $e_{i}\in S$.
\end{enumerate}
Then, $(u_{1}^{*},u_{2}^{*})\in \Theta_{1}\times \Theta_{2}$ is a Nash equilibrium of the system (\ref{eq:3.1})-(\ref{3.1.1}) and the problems (\ref{j1})-(\ref{j2}).
\end{theorem}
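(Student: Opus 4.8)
The plan is to reduce the game-theoretic statement to two separate classical sufficient maximum principle arguments, one per player, since the Nash equilibrium definition decouples into two one-sided optimality conditions: fixing $u_{2}^{*}$, we must show $J_{1}(y,e_{i},u_{1},u_{2}^{*})\leq J_{1}(y,e_{i},u_{1}^{*},u_{2}^{*})$ for all $u_{1}\in\Theta_{1}$, and symmetrically for Player 2 with $u_{1}^{*}$ fixed. Each of these is a standard stochastic optimal control problem for the Markov regime-switching jump-diffusion \eqref{eq:3.1}--\eqref{3.1.1}, so the core work is the single-controller sufficient maximum principle. I would therefore carry out the argument for Player 1 in full and then remark that Player 2 follows by the identical computation with indices swapped.

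The key steps for Player 1's inequality: write $J_{1}(y,e_{i},u_{1}^{*},u_{2}^{*})-J_{1}(y,e_{i},u_{1},u_{2}^{*})$ as an expectation of $\int_{0}^{T}(f_{1}(\hat{\cdot})-f_{1}(\cdot))\,dt + g_{1}(\hat{Y}(T),\alpha(T))-g_{1}(Y^{u_{1}}(T),\alpha(T))$. First I would handle the terminal term by concavity of $g_{1}$ (condition 3), bounding $g_{1}(\hat{Y}(T))-g_{1}(Y^{u_{1}}(T))\geq \nabla g_{1}(\hat{Y}(T))^{T}(\hat{Y}(T)-Y^{u_{1}}(T)) = \hat{p}^{1}(T)^{T}(\hat{Y}(T)-Y^{u_{1}}(T))$ using the terminal adjoint condition \eqref{adj2} — wait, concavity gives the reverse; since we are lower-bounding $g_1(\hat Y)-g_1(Y^{u_1})$ we use $g_1(\hat Y(T)) - g_1(Y^{u_1}(T)) \ge \nabla g_1(\hat Y(T))^T(\hat Y(T)-Y^{u_1}(T))$ only if $g_1$ is concave, which is exactly condition 3. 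Next I would apply the It\^o product formula for regime-switching jump-diffusions (as in \cite{zes:02}) to $\hat{p}^{1}(t)^{T}(\hat{Y}(t)-Y^{u_{1}}(t))$ on $[0,T]$; the square-integrability hypotheses in the theorem statement (the four displayed $E[\int_0^T \cdots]<\infty$ conditions) are precisely what guarantee the local-martingale terms from $dW$, $\tilde N$ and $d\tilde\Phi$ are true martingales with zero expectation. This converts the terminal expression into an integral involving $\nabla_y H^{1}$, the drift/diffusion/jump coefficient differences, and $\hat q^1,\hat r^1,\hat w^1$. Collecting terms and recognising the Hamiltonian, the difference $J_{1}(\hat{\cdot})-J_{1}(\cdot)$ becomes $E[\int_{0}^{T}\{H^{1}(\hat{\cdot}) - H^{1}(\cdot) - \nabla_{y}H^{1}(\hat{\cdot})^{T}(\hat{Y}-Y^{u_{1}})\}\,dt]$ plus a nonnegative remainder.

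Then I would invoke conditions 1 and 2: define $\hat{H}^{1}(y) := \sup_{u_{1}}H^{1}(t,y,u_{1},u_{2}^{*},\hat p^1,\hat q^1,\hat r^1,\hat w^1,e_i)$. By condition 1, $\hat{H}^{1}(\hat{Y}(t-)) = H^{1}(t,\hat{Y}(t-),u_{1}^{*},u_{2}^{*},\ldots)$, i.e.\ the supremum is attained along the optimal state. By definition of the supremum, $\hat{H}^{1}(Y^{u_{1}}(t)) \geq H^{1}(t,Y^{u_{1}}(t),u_{1},u_{2}^{*},\ldots)$. By condition 2, $\hat{H}^{1}$ is concave in $y$, so $\hat{H}^{1}(Y^{u_{1}}) - \hat{H}^{1}(\hat{Y}) \le \nabla\hat{H}^{1}(\hat{Y})^{T}(Y^{u_{1}}-\hat{Y})$; since the supremum is attained at $u_1^*$ along $\hat Y$, an envelope/first-order argument identifies $\nabla\hat H^1(\hat Y(t))$ with $\nabla_y H^1(\hat\cdot)$ (this is where one must be slightly careful — the standard device is to note $\hat H^1(y) \ge H^1(t,y,u_1^*(t),u_2^*(t),\ldots)$ with equality at $y=\hat Y(t)$, so the concave function $\hat H^1(\cdot)$ and the $C^1$ function $H^1(\cdot,u_1^*(t),\ldots)$ touch at $\hat Y(t)$, forcing $\nabla\hat H^1(\hat Y(t)) = \nabla_y H^1(t,\hat Y(t),u_1^*,u_2^*,\ldots)$ in the subgradient sense). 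Chaining these inequalities shows the integrand is $\le 0$, hence $J_{1}(y,e_{i},u_{1}^{*},u_{2}^{*}) \geq J_{1}(y,e_{i},u_{1},u_{2}^{*})$. I expect the main obstacle to be exactly this envelope step — justifying that the supremum-Hamiltonian's gradient at the optimal trajectory coincides with $\nabla_y H^1$ evaluated along the optimum, which requires the concavity of $\hat H^1$ together with the touching condition, and care that $\hat Y$ uses $\hat Y(t-)$ while the integral is a.e.\ in $t$. Finally, running the symmetric argument with $(1,u_1,u_2^*) \leftrightarrow (2,u_2,u_1^*)$ gives $J_{2}(y,e_{i},u_{1}^{*},u_{2}^{*}) \geq J_{2}(y,e_{i},u_{1}^{*},u_{2})$, and the two inequalities together are precisely the definition of a Nash equilibrium for \eqref{eq:3.1}--\eqref{3.1.1} and \eqref{j1}--\eqref{j2}.
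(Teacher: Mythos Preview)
Your proposal is correct and is exactly the approach the paper intends: the paper's own proof merely states that one should ``follow the steps of Theorem 3.1 in \cite{zes:02} in our game theoretical formulation for each player'' and observes that the result is a special version of Theorem 3.1 in \cite{pamen}, which is precisely your decoupling into two single-controller sufficient maximum principles via concavity of $g_k$, the It\^o product formula applied to $\hat{p}^{k}(t)^{T}(\hat{Y}(t)-Y^{u_k}(t))$, and the concavity/envelope argument for $\hat{H}^{k}$. Your outline is in fact more detailed than what the paper provides.
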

\begin{proof}
For the proof of this theorem, it is enough to follow the steps of Theorem 3.1 in \cite{zes:02} in our game theoretical formulation for each player. Moreover, the proof may be seen as a special version of Thoerem 3.1 in \cite{pamen}.
\end{proof}

\end{document}